\newtheorem{theorem}{Theorem}[section]
\newtheorem{lemma}{Lemma}[section]
\theoremstyle{remark}
\newtheorem{remark}{Remark}
\numberwithin{equation}{section}
\newcommand{\infsum}{\sum^\infty_{n=-\infty}}
\newcommand{\cfplus}{\mathbin{\genfrac{}{}{0pt}{}{}{+}}}
\newcommand{\lb}{\left}
\newcommand{\rb}{\right}
\begin{document}
\title[$P$--$Q$ ``mixed'' modular equations of degree 15]
{$P$--$Q$ ``mixed'' modular equations of degree 15}
\author[S. Chandankumar and B. Hemanthkumar]
{S. Chandankumar and B. Hemanthkumar}
\address[S. Chandankumar and B. Hemanthkumar]{Department of Mathematics, M. S. Ramaiah University of Applied Sciences, Peenya campus,
Bengaluru - 560058, Karnataka,India}\email{{hemanthkumarb.30@gmail.com, chandan.s17@gmail.com}}
\subjclass[2010]{33E05, 11F20}
\keywords{Modular equations, Theta--functions.}
\begin{abstract}
Ramanujan in his second notebook recorded total of seven $P$--$Q$ modular equations involving theta--function $f(-q)$ with moduli of orders 1, 3, 5 and 15. In this paper, modular equations analogous to those recorded by Ramanujan are obtained involving his theta--functions $\varphi(q)$ and $\psi(-q)$ with moduli of orders 1, 3, 5 and 15. As a consequence, several values of quotients of theta--function and a continued fraction of order 12 are explicitly evaluated.  
\end{abstract}
\maketitle
\section{Introduction}
\noindent Throughout, we assume that $|q|<1$, Ramanujan's general theta--function $f(a,b)$ is defined by
\begin{equation}
f(a,b):=\sum_{n=-\infty}^{\infty}a^{n(n+1)/2}
b^{n(n-1)/2},\,\,\,|ab|<1.
\end{equation}
  Furthermore following Ramanujan we define three special cases of $f(a,b)$:
  \begin{equation*}\begin{split}
&\varphi(q):=f(q,q)=\infsum{q^{n^2}},\\
&\psi(q):=f(q,q^3)=\sum_{n=0}^{\infty}q^{n(n+1)/2},\\
&f(-q):=f(-q,-q^2)=\sum_{n=-\infty}^{\infty}(-1)^nq^{n(3n-1)/2}.
\end{split}\end{equation*}
The ordinary or Gaussian hypergeometric function is defined by
$$_2F_1(a,b;c;z):=\sum_{n=0}^{\infty}\frac{\lb(a\rb)_n \lb(b\rb)_n}{\lb(c\rb)_n n!}z^n,\ \ \ 0\leq|z|<1,$$
where $a$, $b$, $c$ are complex numbers, $c\neq0,-1,-2,\ldots$, and $$(a)_0=1,\ \ (a)_n=a(a+1)\cdots(a+n-1)\ \ \textrm{for any positive integer}\ \ n.$$
Now we recall the notion of a ``mixed'' modular equation. Let $K(k)$ be the complete elliptic integral of the
first kind of modulus $k$, we have
\begin{equation}\label{ee11}
K(k):=\int_0^{\frac{\pi}{2}}\frac{d\phi}{\sqrt{1-k^2\sin^2\phi}}
=\frac{\pi}{2}\sum_{n=0}^{\infty}\frac{\left(\frac{1}{2}\right)^2_n}{\left(n!\right)^2}k^{2n}=\frac{\pi}{2}\varphi^2(q)
,\,\,\,\,\,(0<k<1),
\end{equation}
and set $K'=K(k')$, where $k'=\sqrt{1-k^2}$  is called
complementary modulus of $k$. It is classical to set $q(k)=e^{-\pi
K(k')/K(k)}$ so that $q$ is one-to-one and increases from 0 to 1.

 Following Ramanujan we set $\alpha:=k^2$, and let $K$, $K'$, $L_1$, $L_1'$, $L_2$, $L_2'$, $L_3$ and $L_3'$ denote complete elliptic integrals of the first kind corresponding, in pairs, to the moduli $\sqrt{\alpha}$, $\sqrt{\beta}$, $\sqrt{\gamma}$ and $\sqrt{\delta}$, and their complementary moduli, respectively. Suppose that the equalities
\begin{equation}\label{ee14}
n_1\frac{K'}{K}=\frac{L_1'}{L_1},\ \
n_2\frac{K'}{K}=\frac{L_2'}{L_2} \ \ \textrm {and}\ \
n_3\frac{K'}{K}=\frac{L_3'}{L_3},
\end{equation}
hold for some positive integers $n_1$, $n_2$ and $n_3$. Then the relation between the moduli $\sqrt{\alpha}$, $\sqrt{\beta}$, $\sqrt{\gamma}$ and $\sqrt{\delta}$ that is induced by \eqref{ee14} is called as a ``mixed'' modular equation of composite degree $n_3=n_1n_2$.  We say that $\beta$, $\gamma$ and $\delta$ are of degrees $n_1$, $n_2$ and $n_3$ respectively over $\alpha$.  The multipliers $m=K/L_1$ and $m'=L_2/L_3$ are algebraic relations involving $\alpha$, $\beta$, $\gamma$ and $\delta$.

 Ramanujan \cite{SR2} recorded total of seven $P$--$Q$ modular equations involving theta--function $f(-q)$ with moduli of orders 1, 3, 5 and 15. For example, he proved that 
 \vspace{0.3cm}
 \newline
If $P:=\dfrac{f(-q^3)f(-q^5)}{q^{1/3}f(-q)f(-q^{15})}$\ \ and \
\ $Q:=\dfrac{f(-q^6)f(-q^{10})}{q^{2/3}f(-q^2)f(-q^{30})}$, then

\begin{equation}\label{S26}
PQ+\frac{1}{PQ}=\lb(\frac{Q}{P}\rb)^3+\lb(\frac{P}{Q}\rb)^3+4.
 \end{equation}
 
 The proof of above equation by using classical methods can be found in \cite{BCB2}. Recently, Mahadeva Naika et al. in \cite{CKSHKBMSM}, have derived a new class of modular identities relating $P$ and $Q_r$, where
 \begin{equation}\label{ex}
P:=\frac{q^{1/3}f(-q)f(-q^{15})}{f(-q^3)f(-q^5)} \ \,\,\textrm{and}\,\,\ Q_{r}:=\frac{q^{r/3}f(-q^r)f(-q^{15r})}{f(-q^{3r})f(-q^{5r})},
\end{equation}
for {$r\in$ \{3, 4, 5, 7\} and using these modular relations they have explicitly evaluated several new cubic class invariants and cubic singular moduli.

\noindent The main goal of this paper is to establish new $P$--$Q$ modular relations that involve $\varphi(q)$ and $\psi(-q)$ with moduli of orders 1, 3, 5 and 15, which are not recorded by Ramanujan in his notebooks and also in his lost notebook. We use these modular relations to evaluate several new explicit evaluations of ratios of theta--functions $\varphi(q)$ and $\psi(-q)$, also a continued fraction of order 12.   

Ramanujan in his notebooks listed several explicit evaluations of $\varphi(e^{-\pi\sqrt{n/k}})$  for few rationals $n$ and $k$. Instigated by the works of Ramanujan, Jinhee Yi in \cite{jy2}, introduced two parameterizations $h_{k,n}$ and $h'_{k,n}$  involving the theta-function $\varphi(q)$ and $\varphi(-q)$ as:
\begin{equation}\label{hkn}
h_{k,n}:=\frac{\varphi(e^{-\pi\sqrt{n/k}})}{k^{1/4}\varphi(e^{-\pi \sqrt{nk}})}
\end{equation}
and
\begin{equation}\label{hkkn}
h'_{k,n}:=\frac{\varphi(-e^{-\pi\sqrt{n/k}})}{k^{1/4}\varphi(-e^{-\pi \sqrt{nk}})}.
\end{equation}
Yi systematically studied several properties of $h_{k,n}$ and $h'_{k,n}$, also explicitly evaluated the parameters for different positive rational values of $n$ and $k$. Motivated by works of Yi, in \cite{ND2} and \cite{jy4}, the authors have defined two parameters $l_{k,n}$ and $l'_{k,n}$  involving the theta-function $\psi(-q)$ and $\psi(q)$ as follows:
\begin{equation}\label{lkn}
l_{k,n}:=\frac{\psi(-e^{-\pi\sqrt{n/k}})}{k^{1/4}e^{-\frac{(k-1)\pi}{8}\sqrt{n/k}}\psi(-e^{-\pi\sqrt{nk}})}
\end{equation}
and
\begin{equation}\label{lkkn}
l'_{k,n}:=\frac{\psi(e^{-\pi\sqrt{n/k}})}{k^{1/4}e^{-\frac{(k-1)\pi}{8}\sqrt{n/k}}\psi(e^{-\pi\sqrt{nk}})}.
\end{equation}

They have also established several properties and some explicit evaluations of $l_{k,n}$ and  $l'_{k,n}$ for different positive rational values of $n$ and $k$.

The work is organized as follows. In Section \ref{S22}, we collect some relevant identities which are used in the subsequent sections. New $P$--$Q$ ``mixed'' modular identities of degree 1, 3, 5 and 15  involving $\varphi(q)$ and $\psi(-q)$ are established in Section \ref{S3}. Several new  explicit evaluations of quotients of $\varphi(q)$ and $\psi(-q)$ are evaluated in Sections \ref{S4} and \ref{S5}. In Section \ref{S6} explicit evaluations of a continued fraction of order 12 are established.

\section{Preliminary results}\label{S22}
To begin with we shall list equations which are helpful in proving our main results. For concise we set
\begin{equation}
B_{r}:= q^{r/3}\frac{f(-q^r)f(-q^{15r})}{f(-q^{3r})f(-q^{5r})}.
\end{equation}
\begin{lemma}\label{l8}\cite[Ch. 16, Entry 24 (ii) \& (iv), p. 39]{BCB1} We have
	\begin{eqnarray}
	&f^3(-q)=\varphi^2(-q)\psi(q),
	\\&f^3(-q^2)=\varphi(-q)\psi^2(q).
	\end{eqnarray}
	\newpage
\end{lemma}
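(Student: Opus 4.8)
The plan is to pass from theta-functions to infinite products by the Jacobi triple product identity, after which both assertions collapse to one-line cancellations. Writing $(a;q)_\infty:=\prod_{n\geq 0}(1-aq^n)$ and using $f(a,b)=(-a;ab)_\infty(-b;ab)_\infty(ab;ab)_\infty$, the three special cases of $f(a,b)$ give the classical product evaluations
\[
f(-q)=(q;q)_\infty,\qquad \varphi(-q)=\frac{(q;q)_\infty^2}{(q^2;q^2)_\infty},\qquad \psi(q)=\frac{(q^2;q^2)_\infty^2}{(q;q)_\infty}.
\]
Since $f(-q^2)=(q^2;q^2)_\infty$, these can be rewritten as $\varphi(-q)=f^2(-q)/f(-q^2)$ and $\psi(q)=f^2(-q^2)/f(-q)$; alternatively, all three evaluations are recorded in \cite{BCB1}.

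With these substitutions the lemma is immediate. For the first identity I would compute
\[
\varphi^2(-q)\,\psi(q)=\frac{f^4(-q)}{f^2(-q^2)}\cdot\frac{f^2(-q^2)}{f(-q)}=\frac{f^4(-q)}{f(-q)}=f^3(-q),
\]
and for the second,
\[
\varphi(-q)\,\psi^2(q)=\frac{f^2(-q)}{f(-q^2)}\cdot\frac{f^4(-q^2)}{f^2(-q)}=f^3(-q^2).
\]

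So the only real content is in deriving the three product evaluations, i.e. in the Jacobi triple product bookkeeping; once the forms are recorded in compatible Pochhammer bases, both identities reduce to the single fact that $(q;q)_\infty=(q;q^2)_\infty(q^2;q^2)_\infty$ splits over even- and odd-indexed factors. The step I expect to treat most carefully is writing $\psi(q)=f(q,q^3)$ over the modulus $q$, as $f^2(-q^2)/f(-q)$, rather than leaving it in the raw triple-product form $(-q;q^4)_\infty(-q^3;q^4)_\infty(q^4;q^4)_\infty$, so that the cancellation against $\varphi(-q)$ is transparent; beyond that there is no genuine obstacle. If one wished to avoid infinite products, the statements could instead be assembled from standard relations among $\varphi$, $\psi$ and $f$ of the kind collected in \cite{BCB1}, but the product route above is the most economical.
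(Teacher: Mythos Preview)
Your argument is correct: once the triple-product evaluations $f(-q)=(q;q)_\infty$, $\varphi(-q)=(q;q)_\infty^2/(q^2;q^2)_\infty$, and $\psi(q)=(q^2;q^2)_\infty^2/(q;q)_\infty$ are in hand, both identities are immediate cancellations exactly as you wrote. Note that the paper does not actually supply a proof of this lemma; it simply quotes the result from Berndt \cite[Ch.~16, Entry~24]{BCB1}, so there is no in-paper argument to compare against, and your derivation is the standard one found in that reference.
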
 \begin{lemma} The following identity holds 
\begin{equation}\begin{split}\label{S21}
	B_1^3+B_1^2B_2^2+B_2^3=B_1B_2.
	\end{split}\end{equation}
\end{lemma}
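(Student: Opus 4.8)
\medskip
\noindent\emph{Proof strategy.} The plan is to obtain \eqref{S21} as a ``square root'' of Ramanujan's relation \eqref{S26}. The first observation is that the quantities $P$ and $Q$ occurring in \eqref{S26} are exactly the reciprocals of $B_1$ and $B_2$, since $B_r=q^{r/3}f(-q^r)f(-q^{15r})/\bigl(f(-q^{3r})f(-q^{5r})\bigr)$; that is, $P=1/B_1$ and $Q=1/B_2$. Substituting these into \eqref{S26} turns it into
\[
B_1B_2+\frac{1}{B_1B_2}=\frac{B_1^3}{B_2^3}+\frac{B_2^3}{B_1^3}+4,
\]
and multiplying through by $B_1^3B_2^3$ yields the polynomial identity
\[
B_1^2B_2^2+B_1^4B_2^4=B_1^6+B_2^6+4B_1^3B_2^3.
\]

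Next I would introduce $u:=B_1^3+B_2^3$ and $v:=B_1B_2$, so that $B_1^6+B_2^6=u^2-2v^3$. The displayed identity then reads $v^2+v^4=u^2+2v^3$, i.e.
\[
u^2=v^4-2v^3+v^2=v^2(v-1)^2,
\]
so that $u=\pm v(v-1)$ in the ring of $q$-series, that is,
\[
B_1^3+B_2^3=\pm\bigl(B_1^2B_2^2-B_1B_2\bigr).
\]
Identity \eqref{S21} is precisely the case of the minus sign, so the remaining task is to rule out the plus sign.

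To fix the sign I would compare the leading terms of the $q$-expansions. Since $f(-q^r)=1+O(q)$, we have $B_1=q^{1/3}\bigl(1+O(q)\bigr)$ and $B_2=q^{2/3}\bigl(1+O(q)\bigr)$, hence $B_1^3+B_2^3=q+O(q^2)$ (coefficient of $q$ equal to $+1$), while $B_1^2B_2^2-B_1B_2=-q+O(q^2)$ (coefficient of $q$ equal to $-1$). The two candidate branches therefore cannot coincide, which forces $B_1^3+B_2^3=B_1B_2-B_1^2B_2^2$, and rearranging gives \eqref{S21}.

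The only point demanding any care is this sign determination; the rest is straightforward bookkeeping. (One could instead derive \eqref{S21} from scratch using the degree-$1,3,5,15$ mixed modular equations, as is done for the $\varphi$--$\psi$ analogues in Section~\ref{S3}, but piggybacking on \eqref{S26}, which is recalled immediately above, is by far the most economical route.)
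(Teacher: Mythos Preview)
Your proof is correct and follows essentially the same route as the paper: both rewrite Ramanujan's identity \eqref{S26} in terms of $B_1,B_2$, factor it as $(B_1^3+B_1^2B_2^2-B_1B_2+B_2^3)(B_1^3-B_1^2B_2^2+B_1B_2+B_2^3)=0$, and then select the vanishing factor. The only difference is cosmetic: the paper singles out the correct factor by evaluating on the sequence $q_n=1/(n+1)$ and invoking the identity theorem, whereas you (arguably more transparently) compare leading coefficients of the $q$-expansions.
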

\begin{proof}
	Consider the identity \eqref{S26} which is recorded by Ramanujan \cite[Ch. 25, Entry 59, p. 214]{BCB2}, factoring this identity, we get
	\begin{equation}\label{S211}
	(B_1^3-B_1B_2+B_1^2B_2^2+B_2^3)(B_1^3+B_1B_2-B_1^2B_2^2+B_2^3)=0.
	\end{equation}
	 We find that the first factor of \eqref{S211} vanishes and the second factor does not vanish for the sequence $\displaystyle \{q_n\}=\lb\{\frac{1}{1+n}\rb\}$.
	 Hence, first factor is identically equal to zero on $|q|<1$. This completes the proof.
\end{proof}
\begin{lemma}\cite{CKSHKBMSM}
	\text{If}\ \ $P:=B_1B_{3}$\ {and} \ $Q:=\dfrac{B_1}{B_{3}},$\ \ \  then
	\begin{equation}\begin{split}\label{S2}
	&\lb(P^3+\frac{1}{P^3}\rb)\lb(46+Q^6+\frac{1}{Q^6}\rb)-9\lb(\sqrt{P^9}-\frac{1}{\sqrt{P^9}}\rb)\lb(\sqrt{Q^3}+\frac{1}{\sqrt{Q^3}}\rb)\\&
	+9\lb(\sqrt{P^3}-\frac{1}{\sqrt{P^3}}\rb)\lb[\sqrt{Q^9}+\frac{1}{\sqrt{Q^9}}+2\lb(\sqrt{Q^3}+\frac{1}{\sqrt{Q^3}}\rb)\rb]=P^6+\frac{1}{P^6}\\&
	+Q^6+\frac{1}{Q^6}+92.
	\end{split}\end{equation}
\end{lemma}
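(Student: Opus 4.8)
The plan is to reduce \eqref{S2} to a polynomial modular relation between the eta--quotients $B_1$ and $B_3$. Since $P=B_1B_3$ and $Q=B_1/B_3$ we have $B_1^2=PQ$ and $B_3^2=P/Q$; more to the point, every expression of the shape $(\sqrt{P^a})^{\pm1}(\sqrt{Q^b})^{\pm1}$ occurring in \eqref{S2} collapses to an integral power of $B_1$ times an integral power of $B_3$ --- for instance $\sqrt{P^9}\sqrt{Q^3}=B_1^6B_3^3$, $\sqrt{P^3}/\sqrt{Q^9}=B_1^{-3}B_3^{6}$, $P^3=B_1^3B_3^3$, $Q^6=B_1^6B_3^{-6}$, and so on. Hence, for $0<q<1$ (where every radical is unambiguous and positive), \eqref{S2} is \emph{equivalent} to a Laurent--polynomial identity $F(B_1,B_3)=0$, namely the ``$P$--$Q$'' mixed modular equation of degree $3$ for $B_r$, the exact analogue of \eqref{S21}, which is the degree $2$ case relating $B_1$ and $B_2$. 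Once $F(B_1,B_3)=0$ is known on $(0,1)$ it extends to all $|q|<1$ by analyticity, just as in the proof of \eqref{S21}.

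To establish $F(B_1,B_3)=0$ I would follow the classical route used for \eqref{S26} and \eqref{S21}. Put $q=\exp(-\pi K'/K)$ and let $\alpha,\beta,\gamma,\delta$ be of degrees $1,3,5,15$ over $\alpha$ while $\beta',\gamma',\delta'$ are of degrees $9,15,45$; then each of $f(-q),f(-q^3),f(-q^5),f(-q^9),f(-q^{15}),f(-q^{45})$ is a monomial in a rational power of $q$, a power of $z={}_2F_1(\tfrac12,\tfrac12;1;\alpha)$, and a power of the corresponding modulus (these are the standard parametrisations of \cite{BCB1}; Lemma \ref{l8} is the first instance). Substituting them into the definitions of $B_1$ and $B_3$ and then using the classical modular equations of degrees $3$, $5$, $9$, $15$ and $45$ to eliminate the auxiliary moduli and the multipliers leaves a single algebraic relation between $B_1$ and $B_3$. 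In practice this is best organised in the ``guess and verify'' form used for \eqref{S21}: one reads off a candidate $F$ from the initial terms of the $q$--expansions of $B_1$ and $B_3$; after clearing denominators, both sides of $F(B_1,B_3)=0$ are power series; since $B_1$ and $B_3$ are, up to the stated rational powers of $q$, modular functions on $\Gamma_0(45)$, the valence formula bounds the order to which the two series must agree; and verifying that the relevant factor vanishes at the points $q_n=1/(1+n)$ --- exactly as in the argument following \eqref{S211} --- forces $F(B_1,B_3)\equiv0$.

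The last step is routine algebra: substitute $B_1\mapsto\sqrt{PQ}$ and $B_3\mapsto\sqrt{P/Q}$ into $F(B_1,B_3)=0$, separate the part invariant under $(B_1,B_3)\mapsto(1/B_1,1/B_3)$ from the part that changes sign, multiply through by the appropriate power of $PQ$, and regroup; the identity falls into the shape \eqref{S2}, the constants $46$, $92$ and the coefficient $9$ emerging as the binomial combinations produced by the substitution. The main obstacle is the middle step, producing and certifying $F(B_1,B_3)=0$: the elimination among the degree $3$, $5$, $9$, $15$, $45$ modular equations is bulky, and even the valence--formula shortcut needs a fair number of $q$--expansion coefficients because the pole orders on $\Gamma_0(45)$ are not small. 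Everything downstream of $F$ is elementary.
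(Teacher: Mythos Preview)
The paper does not prove this lemma at all: it is quoted verbatim from \cite{CKSHKBMSM} with no argument supplied, exactly like the companion Lemmas for $r=4,5,7$. So there is no ``paper's proof'' to compare against; you are proposing to supply what the authors chose to import.

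As a sketch, your outline is the standard one and is sound in spirit: rewriting \eqref{S2} in the variables $B_1,B_3$ does yield a Laurent polynomial identity, and certifying such an identity by $q$--expansion together with a valence bound on $\Gamma_0(45)$ is the classical route. Two comments, however. First, the analogy with \eqref{S21} is looser than you suggest: the degree--$2$ relation \eqref{S21} arises by \emph{factoring} a Ramanujan identity already in the literature, whereas for $B_1,B_3$ no such ready-made identity is available and the polynomial $F$ must be discovered, not merely selected from among factors. Your ``guess and verify'' paragraph conflates these two situations; the $q_n=1/(1+n)$ trick in the paper is used only to pick the correct factor of a known product, not to certify an identity from scratch. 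Second, the middle step you flag as ``the main obstacle'' is essentially the entire content of the lemma: you have not produced $F$, bounded its degree, or specified how many $q$--coefficients suffice. What you have written is a reasonable plan of attack but not a proof; the actual derivation lives in \cite{CKSHKBMSM}.
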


\begin{lemma}\cite{CKSHKBMSM}
	\text{If}\ \ $P:=B_1B_{4}$\ \ {and} \ \ $Q:=\dfrac{B_1}{B_{4}},$\ \ \  then
	\begin{equation}\begin{split}\label{S37}
	&Q^3+\frac{1}{Q^3}+5\lb(Q^2+\frac{1}{Q^2}\rb)+\lb(\sqrt{P^3}
	-\frac{1}{\sqrt{P^3}}\rb)\lb(\sqrt{Q}+\frac{1}{\sqrt{Q}}\rb)\\&+14\lb(Q+\frac{1}{Q}\rb)+18=0.
	\end{split}\end{equation}
\end{lemma}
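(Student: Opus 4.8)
The plan is to derive \eqref{S37}, which is quoted from \cite{CKSHKBMSM}, using only the relation \eqref{S21}. Write $B_1,B_2,B_4$ for the quantities $B_r$ with $r=1,2,4$. Reading \eqref{S21} as the monic cubic
\[
B_2^{3}+B_1^{2}B_2^{2}-B_1B_2+B_1^{3}=0 ,
\]
and replacing $q$ by $q^{2}$ (which sends $B_1\mapsto B_2$ and $B_2\mapsto B_4$), one also gets
\[
B_4^{3}+B_2^{2}B_4^{2}-B_2B_4+B_2^{3}=0 .
\]
The difference of these two monic cubics in $B_2$ equals $(B_4-B_1)$ times the quadratic
\[
C(B_2)=(B_1+B_4)B_2^{2}-B_2+(B_1^{2}+B_1B_4+B_4^{2}) ,
\]
so eliminating $B_2$ from the first cubic and $C$ produces a polynomial $G(B_1,B_4)$, and the resultant of the original two cubics is $\pm(B_1-B_4)^{3}G(B_1,B_4)$. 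Since $B_2$ is a common root of the two cubics this resultant vanishes for all $q$, and since $B_1\neq B_4$ for $0<q<1$ (for instance $B_1/B_4\to\infty$ as $q\to0^{+}$), it follows that $G(B_1,B_4)=0$ on $0<q<1$. So the first task is to compute $G$ explicitly, e.g.\ by using $C$ to reduce the cubic and thereby express $B_2$ as a rational function of $B_1,B_4$, then back-substituting.

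The next step is to rewrite \eqref{S37} as a polynomial identity in $B_1,B_4$. Putting $P:=B_1B_4$ and $Q:=B_1/B_4$, every term becomes a Laurent monomial in the positive quantities $B_1,B_4$; the only term needing a word is
\[
\lb(\sqrt{P^{3}}-\frac{1}{\sqrt{P^{3}}}\rb)\lb(\sqrt{Q}+\frac{1}{\sqrt{Q}}\rb)=B_1^{2}B_4+B_1B_4^{2}-\frac{1}{B_1B_4^{2}}-\frac{1}{B_1^{2}B_4} ,
\]
the half-integer powers cancelling. Multiplying \eqref{S37} through by $B_1^{3}B_4^{3}$ then turns it into the vanishing of an explicit polynomial $H(B_1,B_4)$ of degree $9$. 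The key algebraic check is that $H$ divides $G$, say $G=H\cdot K$; granting this, it remains to see that $K$ does not vanish identically. For that, just as in the proof of \eqref{S21}, I would evaluate $K$ numerically along the sequence $q_n=\tfrac{1}{1+n}$ and observe it is nonzero there. Since $G\equiv0$, this forces $H$ to vanish along $\{q_n\}$; and because $B_1$ and $B_4$ become analytic functions of $u=q^{1/3}$ near $u=0$, a zero set of $H(B_1,B_4)$ accumulating at $u=0$ forces $H(B_1,B_4)\equiv0$ on $|q|<1$, which is \eqref{S37}.

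The one genuine obstacle is computational: eliminating $B_2$ and exhibiting the factorization $G=H\cdot K$ involves fairly large bivariate polynomials, so I would carry out the elimination and the division with a computer algebra system rather than by hand. Two small points also deserve a remark: the fractional powers $\sqrt{P^{3}}$ and $\sqrt{Q}$ are unambiguous because $B_1,B_4>0$ for $0<q<1$, and the division by $B_1^{3}B_4^{3}$ (and the suppression of the factor $B_1-B_4$) are legitimate since these quantities do not vanish there. A longer but more classical alternative would be to express each $B_r$ in terms of the moduli $\alpha,\beta,\gamma,\delta$ via Lemma \ref{l8} together with the eta-quotient form of $f(-q)$, and then invoke the known modular equations of degrees $3,4,5,12,15,20$ and $60$; the elimination above is shorter and stays entirely within the present paper.
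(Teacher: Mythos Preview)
The paper does not prove this lemma at all: it is stated with the citation \cite{CKSHKBMSM} and no proof environment follows, so there is nothing in the present paper to compare your argument against directly.

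That said, your proposed route---iterate \eqref{S21} under $q\mapsto q^{2}$, eliminate $B_2$ between the two resulting cubics, and then isolate the correct factor by a numerical check along $q_n=\tfrac{1}{1+n}$---is exactly the method the paper itself employs in its own proofs (see the arguments for \eqref{S21}, Lemma~\ref{l7}, and Theorem~3.1), and it is the natural way to recover the result of \cite{CKSHKBMSM} from within the paper's framework. Your observation that $\text{Res}_{B_2}(f_1,f_2)=-(B_1-B_4)^{3}\,\text{Res}_{B_2}(f_1,C)$ is correct, since $f_1$ is monic and $f_2(\alpha)=-(B_1-B_4)C(\alpha)$ at each root $\alpha$ of $f_1$; this cleanly justifies discarding the $(B_1-B_4)$ factor. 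The remaining work is, as you say, a computer-algebra verification that the polynomial $H$ obtained by clearing denominators in \eqref{S37} divides $G=\text{Res}_{B_2}(f_1,C)$ and that the complementary factor is not identically zero on $|q|<1$. In short: the strategy is sound and matches the paper's style; the only thing left undone is the mechanical factorization, which you have correctly flagged.
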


\begin{lemma}\cite{CKSHKBMSM}
	\text{If}\ \ $P:=B_1B_{5}$\ \ {and} \ \ $Q:=\dfrac{B_1}{B_{5}},$\ \ \  then
	\begin{equation}\begin{split}\label{S38}
	&\lb(Q^3+\frac{1}{Q^3}\rb)\lb(1-P-\frac{1}{P}+P^2+\frac{1}{P^2}\rb)+6\lb(P+\frac{1}{P}\rb)-21\lb(P^2+\frac{1}{P^2}\rb)\\&
	-5\lb(\sqrt{Q^3}+\frac{1}{Q^3}\rb)\lb[\sqrt{P}-\frac{1}{\sqrt{P}}-2\lb(\sqrt{P^3}-\frac{1}{\sqrt{P^3}}\rb)+\sqrt{P^5}-\frac{1}{\sqrt{P^5}}\rb]
	\\&+11\lb(P^3+\frac{1}{P^3}\rb)-\lb(P^4+\frac{1}{P^4}\rb)+8=0.
	\end{split}\end{equation}
\end{lemma}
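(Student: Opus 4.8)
The plan is to regard \eqref{S38} as a modular equation of degree $5$ for a single function. Since $B_5(q)=B_1(q^5)$, setting $P=B_1B_5$ and $Q=B_1/B_5$ means we are after a polynomial relation between $B_1(q)$ and $B_1(q^5)$; such a relation is to be expected, because $B_1$ coincides with the weight--zero eta quotient $\eta(\tau)\eta(15\tau)/\big(\eta(3\tau)\eta(5\tau)\big)$, with $q=e^{2\pi i\tau}$. The fractional powers in \eqref{S38} are only apparent: each product of half--integer powers of $P$ and $Q$ occurring there collapses to an integral power of $B_1$ times an integral power of $B_5$, so after clearing denominators \eqref{S38} becomes a genuine polynomial identity $\Phi\big(B_1(q),B_1(q^5)\big)=0$ in $q$, which is what I would aim to establish.

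First I would pass to the elliptic--modulus picture. Writing $z_n={}_2F_1\big(\tfrac12,\tfrac12;1;\alpha_n\big)$ and using the evaluation $f(-q)=2^{-1/6}\sqrt{z}\,\alpha^{1/24}(1-\alpha)^{1/6}q^{-1/24}$ (Berndt, \emph{Ramanujan's Notebooks}, Part III, Ch.~17) --- with $f^{3}(-q)=\varphi^{2}(-q)\psi(q)$ and $f^{3}(-q^{2})=\varphi(-q)\psi^{2}(q)$ from Lemma~\ref{l8} on hand to simplify intermediate expressions --- one finds
$$
B_1=\sqrt{\frac{z_1 z_{15}}{z_3 z_5}}\;\lb(\frac{\alpha\,\delta}{\beta\,\gamma}\rb)^{1/24}\lb(\frac{(1-\alpha)(1-\delta)}{(1-\beta)(1-\gamma)}\rb)^{1/6},
$$
where $\alpha,\beta,\gamma,\delta$ denote the moduli of degrees $1,3,5,15$; the multiplier quotient $\sqrt{z_1z_{15}/(z_3z_5)}$ is itself algebraic in $\alpha,\beta,\gamma,\delta$ by Ramanujan's multiplier formulas for ``mixed'' modular equations of degree $15$. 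Replacing $q$ by $q^5$ shifts each modulus up by a factor $5$ in degree, so $B_5$ is represented the same way through the moduli of degrees $5,15,25,75$. Then I would eliminate: the modular equations of degree $3$ (relating $\alpha$ to $\beta$ and $\gamma$ to $\delta$) together with the degree $15$ ``mixed'' modular equations remove $\beta$ and $\delta$, while the modular equations of degrees $5$ and $25$ over $\alpha$ remove the moduli of degrees $25$ and $75$. After this reduction $P$ and $Q$ become explicit algebraic functions of a single parameter, and taking a resultant produces a polynomial relation between them.

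The final step is to recognise \eqref{S38} inside that output. As in the proof of \eqref{S21}, I expect the elimination to yield a reducible polynomial, so the correct factor must be isolated: one evaluates the candidate factors along a sequence such as $q_n=\tfrac{1}{1+n}\to 0^{+}$ (equivalently, one compares leading $q$--expansions), observes that precisely the factor corresponding to \eqref{S38} vanishes identically there while the others do not, and concludes by analyticity on $|q|<1$. Alternatively, since $\Phi\big(B_1(q),B_1(q^5)\big)$ is a modular function of weight zero and bounded level, it suffices to verify that sufficiently many of its leading $q$--expansion coefficients vanish, a finite computation. The hard part will be the elimination itself: assembling the degree $3$, $5$, $15$, $25$ and $75$ modular equations and carrying the resultant through to the compact form \eqref{S38} is heavy and in practice requires computer algebra, whereas once a polynomial candidate is available the vanishing check is routine.
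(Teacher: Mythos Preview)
The paper does not actually prove this lemma: it is quoted verbatim from \cite{CKSHKBMSM} and no argument is given here, so there is no in-paper proof to set yours against.

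Your outline is conceptually sound. The observation that $B_r(q)=B_1(q^r)$ is the eta quotient $\eta(\tau)\eta(15\tau)/\big(\eta(3\tau)\eta(5\tau)\big)$ at $r\tau$ is correct, the collapse of the half-integer powers of $P,Q$ to integral monomials in $B_1,B_5$ is correct, and the endgame you describe---select the right factor by evaluating along $q_n=1/(1+n)$, exactly as the present paper does for \eqref{S21} and \eqref{MSMCKSHM}, or verify finitely many $q$-coefficients---is the standard and valid way to certify such an identity once a candidate polynomial $\Phi(B_1,B_5)$ is in hand. Where the plan becomes unrealistic is the elimination itself: routing $B_5$ through moduli of degrees $5,15,25,75$ together with the corresponding multipliers pulls in modular equations of enormous size, and pushing a symbolic resultant through that tower down to the compact form \eqref{S38} is not something one can carry out by hand (and is already unpleasant by machine). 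In practice, identities of this kind are obtained either by iterating lower-degree $P$--$Q$ relations for $f(-q)$ (Ramanujan's degree-$5$ and degree-$15$ equations, combined and factored) or, most cleanly, by your second alternative: write down the candidate from a $q$-expansion and then invoke a Sturm-type bound. That alternative is the workable half of your proposal; the modulus-elimination half is correct in principle but is not the efficient route.
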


\begin{lemma}\cite{CKSHKBMSM}
	\text{If}\ \ $P:=B_1B_{7}$\ \ {and} \ \ $Q:=\dfrac{B_1}{B_{7}},$\ \ \  then
	\begin{equation}\begin{split}\label{S39}
	&Q^4+\frac{1}{Q^4}+7\lb(Q^3+\frac{1}{Q^3}\rb)+35\lb(Q^2+\frac{1}{Q^2}\rb)+112\lb(Q+\frac{1}{Q}\rb)-\lb(P^3+\frac{1}{P^3}\rb)\\&
	-7\lb(\sqrt{P^3}-\frac{1}{\sqrt{P^3}}\rb)\lb[\sqrt{Q^3}+\frac{1}{\sqrt{Q^3}}+4\lb(\sqrt{Q}+\frac{1}{\sqrt{Q}}\rb)\rb]+168=0.
	\end{split}\end{equation}
\end{lemma}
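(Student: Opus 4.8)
Since \eqref{S39} is quoted from \cite{CKSHKBMSM}, a complete proof is available there; I outline the route I would follow. The first step is bookkeeping. Because $B_r=B_1\big|_{q\mapsto q^r}$ and because $\sqrt{P}=\sqrt{B_1B_7}$, $\sqrt{Q}=\sqrt{B_1/B_7}$, every term in \eqref{S39} is a monomial in $B_1$ and $B_7$; multiplying through by $B_1^{4}B_7^{4}$ clears all denominators and radicals and converts \eqref{S39} into a polynomial identity $G(B_1,B_7)=0$ of total degree $14$. Since $f(-q^{r})=q^{-r/24}\eta(r\tau)$, the fractional $q$--powers in $B_r$ cancel exactly and
\[
B_1=\frac{\eta(\tau)\,\eta(15\tau)}{\eta(3\tau)\,\eta(5\tau)},\qquad
B_7=\frac{\eta(7\tau)\,\eta(105\tau)}{\eta(21\tau)\,\eta(35\tau)};
\]
so proving \eqref{S39} amounts to showing that a particular weight--$0$ combination of eta--quotients on a congruence subgroup attached to $\Gamma_0(105)$ vanishes identically.

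My plan is the classical ``parametrize and eliminate'' argument. Using the standard evaluation $f(-q)=\sqrt{z_1}\,2^{-1/6}\{\alpha(1-\alpha)/q\}^{1/24}$ from \cite{BCB1}, together with its companions for $f(-q^{3})$, $f(-q^{5})$, $f(-q^{15})$ in terms of the moduli $\beta$, $\gamma$, $\delta$ of degrees $3$, $5$, $15$ over $\alpha$ and the usual multiplier relations, one writes $B_1$ as an explicit algebraic function of $\alpha,\beta,\gamma,\delta$. Replacing $q$ by $q^{7}$ expresses $B_7$ in terms of four further moduli, of degrees $7$, $21$, $35$, $105$ over $\alpha$. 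One then invokes the mixed modular equations of degrees $3$, $5$, $15$ and the modular equation of degree $7$ (see \cite{BCB1,BCB2}) to eliminate every modulus but one, and checks that $G$ collapses to $0$.

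In practice the cleaner finish is the one already used in the proof of \eqref{S21}: substituting the $q$--expansions of $B_1$ and $B_7$, both sides of the cleared identity $G(B_1,B_7)=0$ become analytic functions of $q$ on $|q|<1$, and, by the standard transformation theory of eta--quotients, their difference is a modular function whose poles, all located at cusps, have bounded order. Hence it suffices to verify that the two sides agree along a sequence accumulating at an interior point, e.g.\ $q_n=1/(1+n)$ — equivalently, to match finitely many initial coefficients of the $q$--expansions of $B_1$ and $B_7$, the required number being controlled by the usual valence estimate on $\Gamma_0(105)$. That verification is routine.

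The main obstacle is the elimination itself: producing the degree--$7$ modular equation for the relevant theta--quotient in the variables that make it compatible with the degree $3$, $5$ and $15$ relations, and then carrying out the high--degree polynomial elimination among them. The algebra is heavy enough that a computer algebra system is essentially indispensable; conceptually, however, all the difficulty is front--loaded into arranging the parametrization, after which the statement is a finite algebraic verification.
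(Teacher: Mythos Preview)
The paper does not prove this lemma at all; it is imported verbatim from \cite{CKSHKBMSM} and stated without argument. You correctly flag this at the outset, so there is nothing in the paper to compare your sketch against beyond the bare citation.

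Your outline is a sensible roadmap for how such an eta--quotient identity is established, and the reduction to a Laurent polynomial in $B_1$ and $B_7$ is carried out correctly (indeed every cross term such as $\sqrt{P^3}\sqrt{Q}=B_1^{2}B_7$ lands in $\mathbb{Z}[B_1^{\pm1},B_7^{\pm1}]$, so no genuine radicals survive). Two small points are worth tightening. First, after multiplying by $B_1^{4}B_7^{4}$ the resulting polynomial has total degree $8$, not $14$; the largest monomials are $B_1^{8}$ and $B_7^{8}$, coming from $Q^{\pm4}$. Second, you conflate two distinct finishing arguments when you write that checking agreement along $q_n=1/(1+n)$ is ``equivalently'' matching finitely many $q$--coefficients. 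They are not equivalent: the sequence argument is the identity theorem for holomorphic functions on $|q|<1$ and needs no modularity at all (this is exactly how the paper handles the factorizations in the proofs of \eqref{S21} and Lemma~\ref{l7}), whereas the finite--coefficient check is the Sturm/valence bound and requires the modular--function structure on $\Gamma_0(105)$ you invoke. Either route works here, but they should be kept separate. With those caveats, your proposal is a legitimate sketch; it is, as you say, an outline rather than a proof, and the heavy elimination you defer to a computer algebra system is where the actual content of \cite{CKSHKBMSM} lies.
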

\begin{lemma}\cite{CA3}
	If\,\, $P:=\dfrac{\varphi^4(q)}{\varphi^4(q^3)}$\,\, and \,\, $Q:=\dfrac{\psi^4(-q)}{q\psi^4(-q^{3})}$,\,\, then
	\begin{equation}\label{D3}
	P+PQ=9+Q.
	\end{equation}
\end{lemma}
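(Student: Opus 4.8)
The plan is to pass from \eqref{D3} to Ramanujan's theory of elliptic functions and to reduce it to an algebraic identity in the multiplier of degree $3$. Fix $0<q<1$, let $\alpha$ be defined by $q=e^{-\pi K'/K}$ with $k^2=\alpha$, and let $\beta$ be of degree $3$ over $\alpha$, so that $\beta$ has nome $q^3$. Put $z_1=\varphi^2(q)$, $z_3=\varphi^2(q^3)$ (these are the usual hypergeometric parameters ${}_2F_1(\tfrac12,\tfrac12;1;\alpha)$ and ${}_2F_1(\tfrac12,\tfrac12;1;\beta)$) and let $m=z_1/z_3$ be the multiplier. Then at once $P=\varphi^4(q)/\varphi^4(q^3)=m^2$.

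The ingredient needing care is a closed form for $\psi^4(-q)$. The identity $f^3(-q^2)=\varphi(-q)\psi^2(q)$ of Lemma~\ref{l8} is a power-series identity in $q$, so replacing $q$ by $-q$ gives $f^3(-q^2)=\varphi(q)\psi^2(-q)$; comparing these yields $\psi^2(-q)=\varphi(-q)\psi^2(q)/\varphi(q)$, and hence $\psi^4(-q)=\varphi^2(-q)\psi^4(q)/\varphi^2(q)$. Inserting the classical evaluations $\varphi(q)=\sqrt{z_1}$, $\varphi(-q)=\sqrt{z_1}\,(1-\alpha)^{1/4}$, $\psi(q)=\sqrt{z_1/2}\,(\alpha/q)^{1/8}$ (see \cite{BCB1}), together with their analogues at $q^3$, gives
\begin{equation*}
\psi^4(-q)=\frac{z_1^2}{4}\sqrt{\frac{\alpha(1-\alpha)}{q}},\qquad \psi^4(-q^3)=\frac{z_3^2}{4}\sqrt{\frac{\beta(1-\beta)}{q^3}}.
\end{equation*}
Dividing, the powers of $q$ cancel, so $Q=\psi^4(-q)/\{q\,\psi^4(-q^3)\}=m^2\sqrt{\alpha(1-\alpha)/(\beta(1-\beta))}$.

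Thus $P=m^2$ and $Q=m^2R$ with $R=\sqrt{\alpha(1-\alpha)/(\beta(1-\beta))}$, so \eqref{D3} is equivalent to $m^2-9=m^2(1-m^2)R$, that is, to $\alpha(1-\alpha)/(\beta(1-\beta))=(m^2-9)^2/\{m^4(1-m^2)^2\}$. This is a statement purely about the modular equation of degree $3$, and it follows from the classical parametrisation (see \cite{BCB1}) carried by the multiplier relations $m-1=2(\beta^3/\alpha)^{1/8}$, $3/m-1=2\{(1-\alpha)^3/(1-\beta)\}^{1/8}$ and $(\alpha\beta)^{1/4}+\{(1-\alpha)(1-\beta)\}^{1/4}=1$, namely
\begin{equation*}
\alpha=\frac{(m-1)(m+3)^3}{16m^3},\qquad 1-\alpha=\frac{(m+1)(3-m)^3}{16m^3},
\end{equation*}
\begin{equation*}
\beta=\frac{(m-1)^3(m+3)}{16m},\qquad 1-\beta=\frac{(m+1)^3(3-m)}{16m};
\end{equation*}
substituting these and cancelling gives $\alpha(1-\alpha)/(\beta(1-\beta))=(9-m^2)^2/\{m^4(m^2-1)^2\}$, which is exactly what was required. (One may instead eliminate $\alpha$ and $\beta$ directly from the two multiplier relations and the modular equation; the elimination collapses to a quadratic in $1-\alpha$ whose discriminant is the perfect square $(m^2-3)^2/(4m^2)$, which is what makes the whole computation rational in $m$.) The resulting identity in $m$ holds for $0<q<1$ and extends to $|q|<1$ by analytic continuation, proving \eqref{D3}.

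The one genuinely delicate point is the closed form for $\psi^4(-q)$ — in particular the emergence of $\alpha(1-\alpha)$ in place of $\alpha$, and the bookkeeping of the half-integral powers of $q$ so that they cancel in $Q$; after that the argument is routine degree-$3$ modular-equation algebra. As a safeguard one can first confirm \eqref{D3} as a $q$-expansion, through $O(q^2)$ say, before carrying out the parametric computation.
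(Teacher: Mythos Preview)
Your proof is correct. The key computations check: with $P=m^{2}$ and $Q=m^{2}\sqrt{\alpha(1-\alpha)/(\beta(1-\beta))}$, the identity \eqref{D3} reduces to $\alpha(1-\alpha)/(\beta(1-\beta))=(9-m^{2})^{2}/\{m^{4}(m^{2}-1)^{2}\}$, and your degree-$3$ parametrisation $\alpha=(m-1)(m+3)^{3}/(16m^{3})$, $1-\alpha=(m+1)(3-m)^{3}/(16m^{3})$, $\beta=(m-1)^{3}(m+3)/(16m)$, $1-\beta=(m+1)^{3}(3-m)/(16m)$ yields exactly this after cancellation. The derivation of $\psi^{4}(-q)=\tfrac{z_{1}^{2}}{4}\sqrt{\alpha(1-\alpha)/q}$ via $\psi^{2}(-q)=\varphi(-q)\psi^{2}(q)/\varphi(q)$ is clean, and the $q$-powers do cancel in $Q$ as you claim.

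However, the paper does not actually prove this lemma: it is quoted verbatim from \cite{CA3} (Adiga, Kim, Mahadeva Naika and Madhusudhan) and used later only to transcribe $h_{3,n}$ into $l_{3,n}$ in Section~\ref{S5}. So there is no ``paper's own proof'' to compare against --- you have supplied a self-contained argument where the authors are content to cite. Your approach, passing to the multiplier and using the classical rational parametrisation of the degree-$3$ modular equation, is in the spirit of the modular-equation machinery the paper relies on elsewhere, and is a perfectly reasonable way to make the lemma independent of \cite{CA3}.
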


\begin{lemma}\cite{CA3}
	If\,\, $P:=\dfrac{\varphi^2(q)}{\varphi^2(q^5)}$\,\, and \,\, $Q:=\dfrac{\psi^2(-q)}{q\psi^2(-q^{5})}$,\,\, then
	\begin{equation}\label{D5}
	P+PQ=5+Q.
	\end{equation}
\end{lemma}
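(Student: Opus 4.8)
The plan is to pass to the classical modular parametrization and reduce the identity to a relation among the moduli and the multiplier. Write $q=e^{-\pi K'/K}$ and, in the notation of \eqref{ee11}, put $z_1:={}_2F_1(\tfrac12,\tfrac12;1;\alpha)=\varphi^2(q)$; let $\beta$ be of degree $5$ over $\alpha$, so that $q^5$ is the nome attached to $\beta$, $z_5:={}_2F_1(\tfrac12,\tfrac12;1;\beta)=\varphi^2(q^5)$, and write $m:=z_1/z_5$ for the multiplier of degree $5$. From the classical evaluations $\varphi^2(q)=z_1$, $\varphi^2(-q)=z_1\sqrt{1-\alpha}$, $\psi^2(q)=\tfrac12 z_1(\alpha/q)^{1/4}$, together with Lemma~\ref{l8} (applied also with $q\mapsto-q$, which gives $\varphi(q)\psi^2(-q)=f^3(-q^2)=\varphi(-q)\psi^2(q)$), one obtains
\[
\psi^2(-q)=\tfrac12\,z_1\Big(\tfrac{\alpha(1-\alpha)}{q}\Big)^{1/4},
\]
and the same formulas hold with $(z_1,\alpha,q)$ replaced by $(z_5,\beta,q^5)$. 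Hence
\[
P=\frac{\varphi^2(q)}{\varphi^2(q^5)}=m,\qquad
Q=\frac{\psi^2(-q)}{q\,\psi^2(-q^5)}=m\left(\frac{\alpha(1-\alpha)}{\beta(1-\beta)}\right)^{\!1/4}.
\]

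Substituting these into $P+PQ=5+Q$ and cancelling a factor of $m$, the proposed identity becomes equivalent to
\[
\left(\frac{\alpha(1-\alpha)}{\beta(1-\beta)}\right)^{\!1/4}=\frac{5-m}{m(m-1)},\qquad\text{i.e.}\qquad
\frac{\alpha(1-\alpha)}{\beta(1-\beta)}=\frac{(5-m)^4}{m^4(m-1)^4}.
\]
To recognise the left side, write $\theta_2,\theta_3,\theta_4$ for the Jacobi null values with nome $q$, so that $\theta_3=\varphi(q)$, $\alpha=(\theta_2/\theta_3)^4$, $1-\alpha=(\theta_4/\theta_3)^4$, and use $\theta_2\theta_3\theta_4=2\eta^3$ (with $\tau=iK'/K$, so that $\eta(\tau)=q^{1/12}f(-q^2)$): this gives $\alpha(1-\alpha)=16\,\eta(\tau)^{12}/z_1^{6}$ and, on replacing $\tau$ by $5\tau$, $\beta(1-\beta)=16\,\eta(5\tau)^{12}/z_5^{6}$, whence
\[
\frac{\alpha(1-\alpha)}{\beta(1-\beta)}=\left(\frac{\eta(\tau)}{\eta(5\tau)}\right)^{\!12}\frac{1}{m^{6}}.
\]
Since $\eta(\tau)/\eta(5\tau)=q^{-1/3}f(-q^2)/f(-q^{10})>0$ on $(0,1)$, the whole statement collapses to the single identity
\[
\Big(\tfrac{\eta(\tau)}{\eta(5\tau)}\Big)^{6}=q^{-2}\,\frac{f^{6}(-q^2)}{f^{6}(-q^{10})}=\frac{m\,(5-m)^2}{(m-1)^2}.
\]

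It remains to prove this last equality, which is a classical modular equation of degree $5$: one may quote it from Ramanujan's list of degree-$5$ modular equations \cite[Ch. 19]{BCB1}, derive it from the degree-$5$ relation $(\alpha\beta)^{1/2}+\{(1-\alpha)(1-\beta)\}^{1/2}+2\{16\alpha\beta(1-\alpha)(1-\beta)\}^{1/6}=1$ together with the multiplier formula, or verify it directly on $q$-expansions: $\varphi^2(q)=1+4q+4q^2+\cdots$ gives $m=1+4q+4q^2+\cdots$, whence $m(5-m)^2/(m-1)^2=q^{-2}-6+\cdots$, which matches $q^{-2}f^{6}(-q^2)/f^{6}(-q^{10})=q^{-2}(1-6q^2+\cdots)$, the rational form being pinned down by finitely many coefficients.

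The only real difficulties I anticipate are bookkeeping ones: keeping the several quarter-powers on a common branch while deriving the formula for $Q$, and locating (or re-deriving) the precise rational relation between the eta-quotient and $m$ in the previous paragraph. Finally, the degree-$3$ companion, Lemma~\ref{D3}, succumbs to the very same scheme with $m$ replaced throughout by $m^{2}$, and is cleaner because the moduli then admit the closed forms $\alpha=(m-1)(m+3)^3/(16m^3)$, $\beta=(m-1)^3(m+3)/(16m)$ (and likewise for $1-\alpha$, $1-\beta$), so that $\dfrac{\alpha(1-\alpha)}{\beta(1-\beta)}=\dfrac{(9-m^2)^2}{m^4(m^2-1)^2}$ is immediate.
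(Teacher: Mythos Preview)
The paper does not prove this lemma at all; it merely quotes it from \cite{CA3}. So there is no ``paper's own proof'' to compare against, and your write-up goes well beyond what the paper supplies.

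Your reduction is correct and clean up to the point where you must establish
\[
q^{-2}\,\frac{f^{6}(-q^2)}{f^{6}(-q^{10})}=\frac{m(5-m)^2}{(m-1)^2}.
\]
The computations of $P=m$, of $Q=m\{\alpha(1-\alpha)/\beta(1-\beta)\}^{1/4}$ via Lemma~\ref{l8}, and of $\alpha(1-\alpha)=16\eta(\tau)^{12}/z_1^{6}$ are all fine, and the positivity remarks justify the extraction of the square root. The only genuine weakness is the final step: you do not prove the displayed eta--multiplier identity but instead offer three options, none of which is carried out. Citing \cite[Ch.~19]{BCB1} is legitimate (the formulas you need are in the vicinity of Entries~12--13 there, since $f(-q^2)=2^{-1/3}\sqrt{z_1}\{\alpha(1-\alpha)/q\}^{1/12}$ gives the eta-quotient directly in terms of $\alpha,\beta,m$), but if you prefer the $q$-expansion route you must make the modularity argument explicit: name the relevant congruence subgroup, bound the pole orders, and state how many Fourier coefficients suffice. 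As written, ``the rational form being pinned down by finitely many coefficients'' is a true principle but not yet a proof.

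Your closing remark on the degree-$3$ companion is accurate: with $P=m^{2}$ and the cubic parametrizations of $\alpha,\beta,1-\alpha,1-\beta$ in $m$ one gets $\alpha(1-\alpha)/\beta(1-\beta)=(9-m^{2})^{2}/\{m^{4}(m^{2}-1)^{2}\}$ directly, which finishes \eqref{D3} without any further citation.
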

\begin{lemma}\cite{NDB1}
	If\,\, $P:=\dfrac{\varphi(q)}{\varphi(q^3)}$\,\, and \,\, $Q:=\dfrac{\varphi(q^5)}{\varphi(q^{15})}$,\,\, then
	\begin{equation}\label{NDB5}
	(PQ)^2+\dfrac{9}{(PQ)^2}=\left(\frac{Q}{P}\right)^3+5\left(\frac{Q}{P}\right)^2+5\left(\frac{P}{Q}\right)^2+5\left(\frac{Q}{P}-\frac{P}{Q}\right)-\left(\frac{P}{Q}\right)^3.
	\end{equation}
\end{lemma}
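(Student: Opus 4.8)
The plan is to recast \eqref{NDB5} inside Ramanujan's theory of elliptic functions and then reduce it to classical modular equations of degrees $3$ and $5$. By \eqref{ee11}, $\varphi^{2}(q)=\frac{2}{\pi}K(k)={}_2F_1\left(\frac{1}{2},\frac{1}{2};1;\alpha\right)=:z_{1}$ with $\alpha=k^{2}$, and, replacing $k$ there by the modulus of degree $j$ over $\alpha$, also $\varphi^{2}(q^{j})={}_2F_1\left(\frac{1}{2},\frac{1}{2};1;\beta_{j}\right)=:z_{j}$, where $\beta_{j}$ has degree $j$ over $\alpha$. Writing $\alpha,\beta,\gamma,\delta$ for the moduli of degrees $1,3,5,15$ over $\alpha$, we obtain $P^{2}=z_{1}/z_{3}=:m$ and $Q^{2}=z_{5}/z_{15}=:m'$, where $m$ is the multiplier of the degree-$3$ modular equation relating $\alpha$ and $\beta$, while $m'$ is the multiplier of the degree-$3$ modular equation relating $\gamma$ and $\delta$ (note $\delta$ has degree $3$ over $\gamma$). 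Since the left-hand side of \eqref{NDB5} depends only on $PQ=\sqrt{mm'}$ and the right-hand side only on $P/Q=\sqrt{m/m'}$, proving \eqref{NDB5} is equivalent to establishing one algebraic relation between the multipliers $m$ and $m'$.

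I would first parametrise all four moduli by $(m,m')$. The classical degree-$3$ multiplier formulas (Berndt, \emph{Ramanujan's Notebooks}, Part III, Ch.~19) read
\begin{equation*}
\alpha=\frac{(m-1)(m+3)^{3}}{16m^{3}},\quad 1-\alpha=\frac{(3-m)^{3}(m+1)}{16m^{3}},\quad \beta=\frac{(m-1)^{3}(m+3)}{16m},\quad 1-\beta=\frac{(3-m)(m+1)^{3}}{16m},
\end{equation*}
and the same formulas with $m$ replaced by $m'$ give $\gamma,1-\gamma,\delta,1-\delta$ in terms of $m'$. Next, because $\gamma$ has degree $5$ over $\alpha$ and $\delta$ has degree $5$ over $\beta$, Ramanujan's modular equation of degree $5$ (Berndt, \emph{loc.~cit.}) --- which may be written, for instance, as
\begin{equation*}
(\alpha\gamma)^{1/2}+\{(1-\alpha)(1-\gamma)\}^{1/2}+2\{16\,\alpha\gamma(1-\alpha)(1-\gamma)\}^{1/6}=1
\end{equation*}
--- holds for the pair $(\alpha,\gamma)$ and also for the pair $(\beta,\delta)$. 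Substituting the rational parametrisations above and clearing radicals (valid since all the moduli lie in $(0,1)$ for $0<q<1$) turns each of these into a polynomial relation between $m$ and $m'$.

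What remains is to extract, from the common zero locus of these two polynomials, the irreducible component satisfied by the genuine multipliers $m=\varphi^{2}(q)/\varphi^{2}(q^{3})$, $m'=\varphi^{2}(q^{5})/\varphi^{2}(q^{15})$; this component is singled out by matching a few leading coefficients of the $q$-expansions of $P$ and $Q$ (equivalently, by a single numerical check), and rewriting it with $m=P^{2}$, $m'=Q^{2}$ yields \eqref{NDB5}. I expect this elimination to be the main obstacle: the polynomials in $m,m'$ have fairly high degree, and one must correctly discard the extraneous factors introduced by clearing radicals. A computer algebra system makes the computation routine; done by hand, the work can be roughly halved by noting that \eqref{NDB5} is invariant under $P\mapsto\sqrt{3}/Q$, $Q\mapsto\sqrt{3}/P$ --- a substitution that fixes $P/Q$ and merely interchanges the two terms on the left --- which reflects an involution of the underlying modular curve.

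Alternatively, one can avoid the theory of elliptic functions altogether. After multiplying \eqref{NDB5} through by $(PQ)^{2}$, both sides are Laurent polynomials in $P=\varphi(q)/\varphi(q^{3})$ and $Q=\varphi(q^{5})/\varphi(q^{15})$, hence modular functions on a congruence subgroup of level $60$, holomorphic on the upper half-plane and with poles only at the cusps, of bounded order. The finite-dimensionality of the space of such functions then reduces \eqref{NDB5} to checking that sufficiently many Fourier coefficients of the two sides agree, which is a direct $q$-series computation.
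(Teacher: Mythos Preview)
The paper does not prove this lemma at all; it is quoted from Baruah~\cite{NDB1} as a known result and used later only through its reformulation~\eqref{NDBh5}. So there is no ``paper's own proof'' to compare against.

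Your first outline is a sound classical route, and in spirit it is close to how such mixed equations are usually derived: the degree-$3$ rational parametrisation of $(\alpha,\beta)$ by the multiplier $m$ (and of $(\gamma,\delta)$ by $m'$) is correct, and imposing Ramanujan's degree-$5$ relation on the pairs $(\alpha,\gamma)$ and $(\beta,\delta)$ does produce an algebraic relation in $m,m'$ of which~\eqref{NDB5} is the relevant irreducible factor. The involution $P\mapsto\sqrt{3}/Q$, $Q\mapsto\sqrt{3}/P$ that you notice is genuine (it is the $q\mapsto 1/q$ reciprocation combined with the transformation $\varphi(e^{-\pi t})=t^{-1/2}\varphi(e^{-\pi/t})$) and does cut the work. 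Your alternative ``modular-functions plus finite $q$-check'' approach is also legitimate in principle.

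That said, what you have written is a programme, not a proof. Two points would need to be supplied before it becomes one: (i) the elimination and radical-clearing actually has to be carried out, with the spurious factors explicitly discarded; and (ii) ``matching a few leading coefficients'' or ``a single numerical check'' is not by itself a proof of identity --- you must first bound the number of coefficients that suffice, either via the degree of the resultant in the first approach or via the valence formula on the appropriate congruence subgroup in the second. Without those bounds the selection of the correct factor remains heuristic.
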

\begin{lemma}\cite[Ch. 25, Entry 67, p. 235]{BCB2}
	If\,\, $P:=\dfrac{\varphi(q)}{\varphi(q^5)}$\,\, and \,\, $Q:=\dfrac{\varphi(q^5)}{\varphi(q^{15})}$,\,\, then
	\begin{equation}\label{SR3}
	PQ+\dfrac{5}{PQ}=\left(\frac{Q}{P}\right)^2+3\left(\frac{Q}{P}\right)+3\left(\frac{P}{Q}\right)-\left(\frac{P}{Q}\right)^2.
	\end{equation}
\end{lemma}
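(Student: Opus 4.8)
The plan is to follow the classical route for modular equations of this type, through the hypergeometric parametrization of the theta--functions (which is essentially how the entry is proved in \cite{BCB2}). First I would invoke \eqref{ee11}: writing $z(x)={}_2F_1(\tfrac12,\tfrac12;1;x)$ and $q=\exp\!\left(-\pi\,\dfrac{z(1-\alpha)}{z(\alpha)}\right)$, one has $\varphi^{2}(q)=z(\alpha)$. Taking $\beta$ of degree $5$ and $\delta$ of degree $15$ over $\alpha$ (so that $\delta$ has degree $3$ over $\beta$), the same formula applied to $q^{5}$ and $q^{15}$ gives $\varphi^{2}(q^{5})=z(\beta)$ and $\varphi^{2}(q^{15})=z(\delta)$. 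Hence $P^{2}=z(\alpha)/z(\beta)$ is the multiplier of the degree--$5$ modular equation between $\alpha$ and $\beta$, $Q^{2}=z(\beta)/z(\delta)$ is the multiplier of the degree--$3$ modular equation between $\beta$ and $\delta$, and $(PQ)^{2}=z(\alpha)/z(\delta)$ is the degree--$15$ multiplier; each is an explicit algebraic function of $\alpha,\beta,\delta$.

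Next I would strip the half--integral powers from \eqref{SR3}: multiplying through by $PQ$ and rearranging isolates the radical term as
\begin{equation*}
(PQ)^{2}+5-3P^{2}-3Q^{2}=\frac{Q^{4}-P^{4}}{PQ},
\end{equation*}
and squaring gives the purely rational relation
\begin{equation*}
\left((PQ)^{2}+5-3P^{2}-3Q^{2}\right)^{2}=\frac{(Q^{4}-P^{4})^{2}}{P^{2}Q^{2}}
\end{equation*}
between $P^{2}$ and $Q^{2}$. Into this I would feed the standard multiplier formulas of degrees $3$ and $5$ together with the modular equations of degrees $3$, $5$, $15$ relating $\alpha,\beta,\delta$ (all classical; see \cite{BCB1}), and verify that it reduces to an identity. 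Because of the squaring, the branch must be fixed at the end: as $q\to0$, $\varphi(q)\to1$, so $P,Q\to1$ and both sides of \eqref{SR3} tend to $6$, and matching the next terms of the two $q$--expansions then selects precisely the sign displayed in \eqref{SR3}.

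The hard part is the elimination in the previous step: the degree--$5$ and degree--$15$ multiplier formulas carry sixth roots, and the three moduli $\alpha,\beta,\delta$ are cumbersome to handle simultaneously. The clean way around this is to introduce a single parameter $t$ uniformizing $X_{0}(15)$ --- equivalently, to use $\varphi(q)=f(-q^{2})^{5}/\{f(-q)^{2}f(-q^{4})^{2}\}$, which writes $P$ and $Q$ as products of the functions $f(-q^{d})$ --- in terms of which $\alpha,\beta,\delta$, and hence $P$ and $Q$, become explicit rational functions of $t$, so that \eqref{SR3} collapses to a one--variable rational identity. Alternatively, after clearing denominators the difference of the two sides of \eqref{SR3} is a holomorphic modular function on a suitable $\Gamma_{0}(N)$, hence a constant, which vanishes by comparison of finitely many Fourier coefficients; or one may try to descend from the neighbouring entry \eqref{NDB5}, combining it with the degree--$15$ ``mixed'' relation linking $\varphi(q)/\varphi(q^{5})$ to $\varphi(q^{3})/\varphi(q^{15})$ and eliminating $\varphi(q^{3})$, though that still requires the auxiliary relation. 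In every version the theta--function bookkeeping is elementary once the right coordinate on the modular curve has been chosen; choosing it --- equivalently, carrying out the elimination of $\alpha,\beta,\delta$ --- is the only genuinely laborious step.
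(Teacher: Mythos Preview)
The paper does not supply a proof of this lemma at all: it is quoted from \cite[Ch.~25, Entry~67]{BCB2} and listed among the preliminary results in Section~\ref{S22} with only the citation, just like the other identities imported there from the literature. There is therefore no in-paper argument to compare your proposal against.

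As for the proposal itself, it is a coherent plan rather than a proof. The translation into multipliers via $\varphi^{2}(q)=z(\alpha)$ is correct, the rearrangement $(PQ)^{2}+5-3P^{2}-3Q^{2}=(Q^{4}-P^{4})/(PQ)$ followed by squaring does yield a relation purely in $P^{2}$ and $Q^{2}$, and the branch check at $q=0$ is the right way to undo the squaring. But you explicitly leave the central step --- substituting the degree-$3$, $5$, $15$ multiplier formulas and eliminating $\alpha,\beta,\delta$ --- undone, and the three alternatives you list (a Hauptmodul for $X_{0}(15)$, a valence argument, or descent from \eqref{NDB5}) are also only named, not executed. Any of these routes can be pushed through, and Berndt's treatment in \cite{BCB2} runs close to your first outline, so the strategy is sound; it simply is not yet a proof.
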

\begin{lemma}\label{l7}
	If  $P:=\dfrac{\varphi(q^3)\varphi(q^5)}{\varphi(q)\varphi(q^{15})}$ and $Q:=\dfrac{\psi(-q^3)\psi(-q^5)}{q\psi(-q)\psi(-q^{15})}$, then
	\begin{equation}\label{MSMCKSHM}
	Q=\frac{1+P}{1-P}.
	\end{equation}
\end{lemma}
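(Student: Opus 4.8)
The plan is to rewrite $P$ and $Q$ in terms of the quantities $B_1$, $B_2$, $B_4$ of Section~\ref{S22} and then to read off the asserted relation from \eqref{S21} alone. Recall the classical product representations
\begin{equation*}
\varphi(q)=\frac{f^5(-q^2)}{f^2(-q)f^2(-q^4)},\qquad \psi(-q)=\frac{f(-q)f(-q^4)}{f(-q^2)}
\end{equation*}
(both follow from the Jacobi triple product; the second can also be recovered from Lemma~\ref{l8}). Substituting these, with $q$ replaced in turn by $q$, $q^3$, $q^5$, $q^{15}$, into the definitions of $P$ and $Q$, and regrouping the factors $f(-q^n)$ according to the three index blocks $\{1,3,5,15\}$, $\{2,6,10,30\}$, $\{4,12,20,60\}$ that occur in $B_1$, $B_2$, $B_4$, a straightforward bookkeeping in which the powers of $q$ match up gives
\begin{equation*}
P=\frac{B_1^2B_4^2}{B_2^5},\qquad Q=\frac{B_2}{B_1B_4}.
\end{equation*}

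Since $1-P\neq0$ and $Q+1\neq0$ for $q$ near $0$, the claim $Q=(1+P)/(1-P)$ is equivalent to $P=(Q-1)/(Q+1)$; inserting the two expressions just obtained and clearing denominators turns this into the polynomial identity
\begin{equation*}
B_1^3B_4^3+B_1^2B_4^2B_2+B_1B_4B_2^5-B_2^6=0.
\end{equation*}
To establish it I would apply \eqref{S21} at two levels: as it stands it reads $B_1^3+B_1^2B_2^2+B_2^3=B_1B_2$, and replacing $q$ by $q^2$ (which sends $B_1\mapsto B_2$ and $B_2\mapsto B_4$) gives $B_2^3+B_2^2B_4^2+B_4^3=B_2B_4$. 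Setting $s=B_1+B_4$, $p=B_1B_4$, $y=B_2$, subtract these two relations and cancel the factor $B_1-B_4$ (nonzero for small $q\neq0$, since $B_1=q^{1/3}(1+\cdots)$, $B_4=q^{4/3}(1+\cdots)$) to get $s^2-p+y^2s=y$, and add them to get $s^3-3ps+y^2(s^2-2p)+2y^3=ys$. Using the first of these to replace $s^2$ and $s^3$ in the second makes it collapse to $sp=y^2(y-p)$; feeding this back into $s^2-p+y^2s=y$ and clearing the denominator $p^2$ yields $y^5(y-p)=p^2(y+p)$, i.e.\ $B_2^6-B_1B_4B_2^5-B_1^2B_4^2B_2-B_1^3B_4^3=0$, which is exactly the identity required. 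As both sides are analytic on $|q|<1$ and agree on a punctured neighbourhood of the origin, the identity holds throughout $|q|<1$.

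The substance of the argument is entirely in the last paragraph: the key points are to notice that \eqref{S21} must be used both in $q$ and in $q^2$, and then to perform the elimination of the symmetric function $s=B_1+B_4$ in the order above, so that the high-degree terms collapse rather than proliferate. Reducing $P$ and $Q$ to $B_1$, $B_2$, $B_4$ is routine but requires care in tracking the numerous $f(-q^n)$-factors; the remaining manipulations are purely formal.
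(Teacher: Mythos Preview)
Your argument is correct. The reductions $P=B_1^2B_4^2/B_2^5$ and $Q=B_2/(B_1B_4)$ check out, and your elimination of $s=B_1+B_4$ from the pair of instances of \eqref{S21} at $q$ and at $q^2$ cleanly produces the required identity $B_2^6-B_1B_4B_2^5-B_1^2B_4^2B_2-B_1^3B_4^3=0$. The divisions by $B_1-B_4$ and by $p=B_1B_4$ are justified for small nonzero $q$, and analytic continuation finishes the job.

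The paper proceeds differently: it replaces $q$ by $-q$ in \eqref{S21}, cubes, and then uses Lemma~\ref{l8} (in the form $f^3(q)=\varphi^2(q)\psi(-q)$, $f^3(-q^2)=\varphi(-q)\psi^2(q)$) to recognise the cubes $B_1(-q)^3$ and $B_2^3$ directly as monomials in $P$ and $Q$. This yields a degree-four polynomial in $P,Q$ which factors, and the correct factor $PQ-P+1+Q$ is isolated by testing on a sequence $q_n\to0$. Your route instead invokes \eqref{S21} at two real base points $q$ and $q^2$, passes through the symmetric functions of $B_1,B_4$, and lands on the target identity without any spurious factor to discard. The trade-off is that the paper's method needs only one application of \eqref{S21} plus Lemma~\ref{l8} but then requires a factor-selection argument, whereas your method uses \eqref{S21} twice and a short elimination but is self-contained and avoids the analyticity sieve altogether.
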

\begin{proof}
	Changing $q$ to $-q$ in the equations \eqref{S21} and cubing the resultant equation on both sides, we have
	\begin{equation}\label{phi1}
	u^3v^3+v^9-6u^3v^6+6u^6v^3-u^9+u^6v^6=0, 
	\end{equation}
	where\,\,$u:=B_1(-q) \,\,\text{and}\,\,v:=B_2.$
	
	By lemma \ref{l8}, it is facile to observe that $u^3=P^2Q$ and $v^3=PQ^2$. By factoring the above equation \eqref{phi1}, we obtain
	\begin{equation}\label{r3}
	(PQ-P+1+Q)(P^2Q^2+P^2Q+P^2-PQ^2-3PQ+P+Q^2-Q+1)=0.
	\end{equation}\label{phi2}
	Observe the first factor of \eqref{r3} vanishes and second factor does not vanish for the sequence $\{q_n\}=\lb\{\frac{1}{1+n}\rb\}$. Hence, first factor is identically equal to zero on $|q|<1$. This completes the proof.
\end{proof}

\section{ $P$--$Q$ ``mixed'' modular equations}\label{S3}
 In this section, we establish several new ``mixed'' modular equations involving Ramanujan's theta--function $\varphi(q)$. Throughout this section, we set
\begin{equation}
A_{r}:= \frac{\varphi(q^r) \varphi(q^{15r})}{\varphi(q^{3r}) \varphi(q^{5r})}
\,\,\,\,\ \textrm {and} \,\,\,\,\ C_{r}:=\dfrac{q^r\psi(-q^r)\psi(-q^{15r})}{\psi(-q^{3r})\psi(-q^{5r})} .
\end{equation}
\begin{theorem}
	\text{If}\ \ $P:=A_{1}A_{2}$\ \ {and} \ \ $Q:=\dfrac{A_{1}}{A_{2}},$\ \ \  then
	\begin{equation}\begin{split}
	\label{S01}
&	Q^2+\frac{1}{Q^2}+P^2+\frac{1}{P^2}+6\left(P+\frac{1}{P}\right)\left[Q+\frac{1}{Q}-4\right]-8\left(Q+\frac{1}{Q}\right)\\&+4\left(\sqrt P-\frac{1}{\sqrt P}\right)\left[\left(\sqrt{Q^3}+\frac{1}{\sqrt{Q^3}}\right)-4\left(\sqrt{Q}+\frac{1}{\sqrt{Q}}\right)\right]\\&+4\left(\sqrt{P^3} -\frac{1}{\sqrt{P^3}}\right)\left(\sqrt{Q}+\frac{1}{\sqrt{Q}}\right)+36=0.
	\end{split} \end{equation}
\end{theorem}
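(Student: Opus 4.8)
My plan is to obtain \eqref{S01} from the single ``mixed'' modular equation \eqref{S21}: Lemma \ref{l7} lets us trade the $\psi(-q)$--quotients $C_1,C_2$ for the $\varphi(q)$--quotients $A_1,A_2$, and \eqref{S21} taken at level $2$ supplies the one extra relation needed. Throughout put $a:=A_1$ and $b:=A_2$, so that $P=ab$, $Q=a/b$, $a^2=PQ$, $b^2=P/Q$; in particular the half--integral powers of $P$ and $Q$ in \eqref{S01} are just $a=\sqrt{PQ}$, $b=\sqrt{P/Q}$ and their monomials. Applying Lemma \ref{l7} first with $q$ and then with $q^2$ in place of $q$ gives
\begin{equation*}
C_1=\frac{a-1}{a+1},\qquad C_2=\frac{b-1}{b+1}.
\end{equation*}

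Next, using Lemma \ref{l8} and the standard $q$--product representations of $\varphi$ and $\psi$, one checks the identities $A_1C_1^2=B_2^3$ and $A_2C_2^2=B_4^3$, hence $A_1A_2C_1^2C_2^2=(B_2B_4)^3$. Writing $S:=B_2B_4$ and replacing $q$ by $q^2$ throughout \eqref{S21} — which turns it into $B_2^3+B_2^2B_4^2+B_4^3=B_2B_4$ — we arrive at
\begin{equation*}
A_1C_1^2+A_2C_2^2+S^2=S,\qquad S^3=A_1A_2C_1^2C_2^2 .
\end{equation*}
Set $T:=A_1C_1^2+A_2C_2^2$ and $N:=A_1A_2C_1^2C_2^2$. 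From $S^2=S-T$ we get $S^3=S^2-ST=S-T-ST$; since $S^3=N$ this gives $S=(N+T)/(1-T)$, and feeding this back into $S^2=S-T$ and clearing denominators yields
\begin{equation*}
N^2+3NT-N+T^3=0 .
\end{equation*}

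It remains to return to $P,Q$. Using $C_i=(A_i-1)/(A_i+1)$ one has $A_iC_i^2=A_i(A_i-1)^2/(A_i+1)^2$; substituting this, and then $A_1=\sqrt{PQ}$, $A_2=\sqrt{P/Q}$, $A_1A_2=P$, $A_1/A_2=Q$, and clearing denominators, the relation of the previous paragraph becomes a polynomial identity in $P$, $Q$, $\sqrt P$, $\sqrt Q$. This identity factors; the factor that vanishes identically on $|q|<1$ — singled out, exactly as in the lemmas of Section \ref{S22}, by evaluating on the sequence $q_n=1/(1+n)$ and verifying that the remaining factors do not vanish there — is \eqref{S01}.

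The steps through the derivation of $N^2+3NT-N+T^3=0$ are short; the real work, and the main obstacle, is the final substitution and simplification. That compact relation becomes, after replacing $C_i$ and then $A_1,A_2$, a rather bulky expression, and the delicate points are to reorganise it into the $Q\mapsto 1/Q$--symmetric shape of \eqref{S01} and to peel off the correct irreducible factor.
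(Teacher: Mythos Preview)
Your argument is correct and follows essentially the same route as the paper. The paper derives \eqref{S311}, which is nothing but your identity $A_1C_1^2=B_2^3$ combined with $C_1=(A_1-1)/(A_1+1)$; it then ``cubes'' \eqref{S21} to obtain \eqref{S3312}, substitutes \eqref{S311} and its $q\to q^2$ analogue, factors the resulting polynomial in $u=A_1$, $v=A_2$, and isolates the vanishing factor via the sequence $q_n=1/(1+n)$. Your compact elimination $N^2+3NT-N+T^3=0$, with $N=B_2^3B_4^3$ and $T=B_2^3+B_4^3$, is precisely \eqref{S3312} (at $q^2$) rewritten in the elementary symmetric functions of $B_2^3$ and $B_4^3$; the subsequent substitution and factor--selection step is identical to the paper's. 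The only cosmetic difference is that you pass through $C_i$ explicitly before invoking Lemma~\ref{l7}, whereas the paper packages this into \eqref{S311} at the outset.
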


\begin{proof}
	From lemma \ref{l8} and lemma \ref{l7}, we have  
	\begin{equation}\label{S311}
	\left(\dfrac{f(-q^2)f(-q^{30})}{f(-q^6)f(-q^{10})}\right)^3=u\left(\dfrac{u-1}{u+1}\right)^2,\,\,\text{where}\,\,\,u=\dfrac{\varphi(q)\varphi(q^{15})}{\varphi(q^3)\varphi(q^5)}.
	\end{equation}
	Cubing the equation \eqref{S21}, we deduce that 
	\begin{equation}\label{S3312}
     B_1^3B_2^3=B_1^9+6B_1^6B_2^3+6B_1^3B_2^6+B_2^9+B_1^6B_2^6.
	\end{equation}
	Invoking \eqref{S311} in \eqref{S3312}, we get
	\begin{eqnarray}\label{S3113}
	\begin{split}
&(uv+u-1+v)(u^2-u-2uv+uv^2+v^2)(u^2v+u^2-2uv-v+v^2)\\&(1-4v-4u+u^4+v^4+36u^2v^2+16u^2v+16uv^2-24uv+6u^4v^2\\&-16u^3v^2+4u^4v-8u^3v+4uv^4-8uv^3+u^4v^4+4u^4v^3+6u^2v^4\\&-16u^2v^3+4u^3v^4-24u^3v^3-4u^3+6u^2+6v^2-4v^3),\end{split}
	\end{eqnarray}
	where $v=u(q^2).$ 
	
	We find that the last factor of \eqref{S3113} vanishes and other factors does not vanish for the sequence $\{q_n\}=\lb\{\dfrac{1}{1+n}\rb\}$.
	Hence, last factor is identically equal to zero on $|q|<1$. By setting $P:=uv$ and $\displaystyle Q:=\frac{u}{v},$ we arrive at \eqref{S01}. This completes the proof.
\end{proof}
\begin{remark}\label{r2}
	Observe that the equation \eqref{S311} can also be re written as follows:
	\begin{equation}\label{S313}
	\left(\dfrac{f(-q^2)f(-q^{30})}{f(-q^6)f(-q^{10})}\right)^3=u^2\left(\dfrac{u+1}{u-1}\right)^2,\,\,\text{where}\,\,\,u=\dfrac{q\psi(-q)\psi(-q^{15})}{\psi(-q^3)\psi(-q^5)}.
	\end{equation}
	Using \eqref{S313} and adopting the same technique illustrated in Section \ref{S3} one can easily arrive at the modular relations connecting $C_1$ with $C_r$, for $r\in\{2, 3, 5,\,\text{and}\,\,7\}$. For brevity these relations involving $\psi(-q)$ are not included in this article. 
\end{remark}
\begin{theorem}
\text{If}\ \ $P:=A_{1}A_{3}$\ \ {and} \ \ $Q:=\dfrac{A_{1}}{A_{3}},$\ \ \  then
\begin{equation}\begin{split}
\label{S31}
&P^2+\frac{1}{P^2}+2\left(P+\frac{1}{P}\right)+\left(Q^2+\frac{1}{Q^2}\right)=4+3\left(\sqrt{Q}+\frac{1}{\sqrt{Q}}\right)\\&\times\left[\left(\sqrt{P^3}+\frac{1}{\sqrt{P^3}}\right)-2\left(\sqrt{P}-\frac{1}{\sqrt{P}}\right)\right]+\left(\sqrt{Q^3}+\frac{1}{\sqrt{Q^3}}\right)\\&\times\left[\left(\sqrt{P}+\frac{1}{\sqrt{P}}\right)-3\left(\sqrt{P}-\frac{1}{\sqrt{P}}\right)\right].
\end{split} \end{equation}
\end{theorem}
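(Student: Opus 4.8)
The plan is to follow the template of the proof of \eqref{S01}, with the $B_1$--$B_2$ relation \eqref{S21} replaced by the $B_1$--$B_3$ relation \eqref{S2}. First I would record the ``bridge'' identity \eqref{S311}, which by the definition of $B_2$ we may rewrite as $B_2^{\,3}=A_1\left(\dfrac{A_1-1}{A_1+1}\right)^2$; since $A_3=A_1(q^3)$ and $B_6=B_2(q^3)$, replacing $q$ by $q^3$ gives $B_6^{\,3}=A_3\left(\dfrac{A_3-1}{A_3+1}\right)^2$. Writing $g(x):=x\left(\dfrac{x-1}{x+1}\right)^2$, we thus have $B_2^{\,3}=g(A_1)$ and $B_6^{\,3}=g(A_3)$.

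Next I would take \eqref{S2}, which connects $B_1$ and $B_3$, and replace $q$ by $q^2$; since $B_1(q^2)=B_2$ and $B_3(q^2)=B_6$, it becomes a relation between $B_2$ and $B_6$, now with $P=B_2B_6$ and $Q=B_2/B_6$. What makes the ensuing substitution clean is that every radical appearing in \eqref{S2} has the shape $\sqrt{P^{a}Q^{\pm b}}$ with $a$ and $b$ both odd and with $\tfrac{1}{2}(a\pm b)$ divisible by $3$; hence each such radical equals an integer power of $B_2^{\,3}$ times an integer power of $B_6^{\,3}$, so \eqref{S2} with $q$ replaced by $q^2$ becomes, after clearing, a Laurent-polynomial identity in $B_2^{\,3}$ and $B_6^{\,3}$. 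Substituting $B_2^{\,3}=g(A_1)$ and $B_6^{\,3}=g(A_3)$ and clearing the factors of $(A_1+1)$ and $(A_3+1)$ introduced by $g$ yields a polynomial identity $F(A_1,A_3)=0$ in which the powers of $q$ have disappeared.

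Finally I would factor $F$ into a product of polynomials in $A_1$ and $A_3$ and evaluate each factor numerically along the sequence $q_n=\frac{1}{1+n}$: exactly one factor vanishes on this sequence while the others do not, so that factor is identically zero on $|q|<1$. Rewriting it in terms of $P:=A_1A_3$ and $Q:=A_1/A_3$ --- the monomials of odd total degree in $A_1$ and in $A_3$ giving rise to the $\sqrt{P}$, $\sqrt{P^3}$, $\sqrt{Q}$, $\sqrt{Q^3}$ terms --- and rearranging produces \eqref{S31}. The main obstacle is the bulk of the algebra in the middle step: substituting a relation as involved as \eqref{S2} into the cubic rational function $g$ produces a polynomial of large degree in $A_1$ and $A_3$, so both its factorisation and the identification of the vanishing factor are best carried out with a computer algebra system; a secondary point requiring care is the verification, noted above, that all the radicals of \eqref{S2} collapse to integer powers of $B_2^{\,3}$ and $B_6^{\,3}$.
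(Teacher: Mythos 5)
Your proposal is correct and follows essentially the same route as the paper, whose proof of \eqref{S31} is literally the one-line instruction to repeat the argument for \eqref{S01} with \eqref{S2} in place of \eqref{S21}; you have correctly supplied the implicit details, namely the bridge $B_2^3=A_1\bigl(\tfrac{A_1-1}{A_1+1}\bigr)^2$ from \eqref{S311} and its $q\mapsto q^3$ shift, the $q\mapsto q^2$ shift of \eqref{S2} so that it connects $B_2$ and $B_6$, the observation that all radicals in \eqref{S2} collapse to Laurent monomials in $B_2^3$ and $B_6^3$, and the final factor-and-test step along $q_n=\tfrac{1}{1+n}$.
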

\begin{proof}
	The proof of the equation \eqref{S31} is similar to \eqref{S01}, except that in the place of the equation \eqref{S21}, \eqref{S2} is used.
\end{proof}
\begin{theorem}
	\text{If}\ \ $P:=A_{1}A_{4}$\ \ {and} \ \ $Q:=\dfrac{A_{1}}{A_{4}},$\ \ \  then
	\begin{equation}\begin{split}
	\label{S411}
	&Q^4+\frac{1}{Q^4}-96\left(Q^3 +\frac{1}{Q^3}\right)+1488\left(Q^2+\frac{1}{Q^2}\right)-3522\left(Q+\frac{1}{Q}\right)+6692\\&P^4+\frac{1}{P^4}+\left(P^3+\frac{1}{P^3}\right)\left[28\left(P+\frac{1}{P}\right)-256\right]-\left(P^2+\frac{1}{P^2}\right)\left[96\left(P+\frac{1}{P}\right)\right.\\&\left.-70\left(P^2+\frac{1}{P^2}\right)-976\right]+\left(P+\frac{1}{P}\right)\left[1064\left(P+\frac{1}{P}\right)-576\left(P^2+\frac{1}{P^2}\right)\right.\\&\left.+28\left(P^3+\frac{1}{P^3}\right)-256\right]+\left(\sqrt P-\frac{1}{\sqrt P}\right) \left[8\left(\sqrt{Q^7}+\frac{1}{\sqrt{Q^7}}\right)-336\left(\sqrt{Q^5}+\frac{1}{\sqrt{Q^5}}\right)\right.\\&\left.+1904\left(\sqrt{Q^3}+\frac{1}{\sqrt{Q^3}}\right)+704\left(\sqrt{Q}+\frac{1}{\sqrt{Q}}\right)\right]+\left(\sqrt {P^3}-\frac{1}{\sqrt {P^3}}\right) \left[56\left(\sqrt{Q^5}+\frac{1}{\sqrt{Q^5}}\right)\right.\\&\left.-576\left(\sqrt{Q^3}+\frac{1}{\sqrt{Q^3}}\right)-592\left(\sqrt{Q}+\frac{1}{\sqrt{Q}}\right)\right]+\left(\sqrt {P^5}-\frac{1}{\sqrt {P^5}}\right) \left[56\left(\sqrt{Q^3}+\frac{1}{\sqrt{Q^3}}\right)\right.\\&\left.+144\left(\sqrt{Q}+\frac{1}{\sqrt{Q}}\right)\right]+\left(\sqrt {P^7}-\frac{1}{\sqrt {P^7}}\right) \left(\sqrt{Q}+\frac{1}{\sqrt{Q}}\right)=0.
	\end{split} \end{equation}
\end{theorem}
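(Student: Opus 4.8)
The plan is to run, for \eqref{S411}, the same template that proved \eqref{S01} and \eqref{S31}, with the modular equation \eqref{S37} relating $B_1$ and $B_4$ taking over the role played there by \eqref{S21} and \eqref{S2}.

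First I would set up the bridge between the $f(-q)$-parametrization and the $\varphi$-parametrization. Exactly as in the derivation of \eqref{S311}, combining Lemma \ref{l8} with Lemma \ref{l7} expresses $B_1^{3}$ as an explicit rational function of $A_1$ (a quotient of low-degree polynomials); since $B_r(q)=B_1(q^{r})$ and $A_r(q)=A_1(q^{r})$ directly from the definitions, replacing $q$ by $q^{4}$ gives the same rational function relating $B_4^{3}$ to $A_4$. A harmless $q\mapsto -q$ twist enters here (just as in the proof of Lemma \ref{l7}), but it is immaterial: the conclusion will be a polynomial identity in $A_1,A_4$, and replacing $q$ by $-q$ at the end restores the stated variables.

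Next I would rationalize \eqref{S37}. Its only irrational ingredients are $\sqrt{P^3}=P\sqrt P$ and $\sqrt Q$ with $P=B_1B_4$, $Q=B_1/B_4$; isolating the single surd product $\bigl(\sqrt{P^3}-1/\sqrt{P^3}\bigr)\bigl(\sqrt Q+1/\sqrt Q\bigr)$, squaring, and then clearing the residual $Q+1/Q$ (which is algebraic of degree three over $B_1^{3},B_4^{3}$) turns \eqref{S37} into a polynomial identity $\Psi\bigl(B_1^{3},B_4^{3}\bigr)=0$. Substituting the bridge formulas for $B_1^{3}$ and $B_4^{3}$ and clearing denominators yields a polynomial $\Phi(A_1,A_4)=0$, which factors. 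One factor, after the substitution $P=A_1A_4$, $Q=A_1/A_4$, is precisely \eqref{S411}: note that $\sqrt{PQ}=A_1$ and $\sqrt{P/Q}=A_4$, so every surd combination occurring in \eqref{S411} (e.g.\ $\sqrt{P^3}\sqrt Q=PA_1$) collapses to integer powers of $A_1,A_4$, and \eqref{S411} is really a Laurent-polynomial identity. Each remaining factor of $\Phi$ is shown not to vanish identically by testing it on the sequence $\{q_n\}=\{1/(1+n)\}$, exactly as in the proofs above; since all the functions in sight are analytic on $|q|<1$, a factor vanishing on a set with a limit point must vanish identically, so the intended factor is identically zero and \eqref{S411} follows.

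The hard part is purely the bulk of the algebra. Equation \eqref{S37} is of degree $6$ in $Q$; squaring it and clearing the residual surds substantially raises this, and substituting the quadratic-over-linear bridge expressions for $B_1^{3}$ and $B_4^{3}$ then inflates the total degree into the range one sees in \eqref{S411}. So the delicate steps are carrying out the factorization, correctly singling out the intended factor among several spurious ones, and reorganizing it into the displayed symmetric form — collecting the many combinations $P^{k}+P^{-k}$, $\sqrt{Q^{2k+1}}+\sqrt{Q^{-(2k+1)}}$, and so on. In practice this is done with a computer algebra system; keeping the branches of the fractional powers consistent is a minor issue, automatic once everything is phrased through $B_r^{3}$ and the even functions $A_r$.
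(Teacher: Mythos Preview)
Your proposal is correct and follows essentially the same route as the paper: the paper's own proof is the single sentence ``the proof of \eqref{S411} is similar to \eqref{S01}, except that in the place of \eqref{S21}, \eqref{S37} is used,'' and your write-up simply unpacks that template (bridge via Lemmas~\ref{l8} and~\ref{l7} as in \eqref{S311}, eliminate non-cube powers from \eqref{S37}, substitute, factor, and select the correct factor on the sequence $q_n=1/(1+n)$). One small wording point: what you call ``rationalizing the surd'' in \eqref{S37} is really the elimination of the non-cube powers of $B_1,B_4$ (the analogue of how \eqref{S21} is turned into \eqref{S3312}); once phrased that way your outline matches the paper exactly.
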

\begin{proof}
	The proof of the equation \eqref{S411} is similar to \eqref{S01}, except that in the place of the equation \eqref{S21}, \eqref{S37} is used.
\end{proof}

\begin{theorem}
	\text{If}\ \ $P:=C_{1}C_{4}$\ \ {and} \ \ $Q:=\dfrac{C_{1}}{C_{4}},$\ \ \  then
	\begin{equation*}\label{psi4}\begin{split}
	&Q^4+\frac{1}{Q^4}-\left(Q^3+\frac{1}{Q^3}\right)+7\left(Q^2+\frac{1}{Q^2}\right)+5\left(Q+\frac{1}{Q}\right)+14\\&
	=\left(P^2+\frac{1}{P^2}\right)\left(Q+\frac{1}{Q}\right)+\left(P+\frac{1}{P}\right)\left[6-2\left(Q^2+\frac{1}{Q^2}\right)-7\left(Q+\frac{1}{Q}\right)\right]\\&
	+\left(\sqrt{P}-\frac{1}{\sqrt{P}}\right)\left[\left(\sqrt{Q^7}+\frac{1}{\sqrt{Q^7}}\right)-2\left(\sqrt{Q^5}+\frac{1}{\sqrt{Q^5}}\right)-7\left(\sqrt{Q^3}+\frac{1}{\sqrt{Q^3}}\right)\right.
	\end{split}\end{equation*}
	\begin{equation}\begin{split}
	&\left.-12\left(\sqrt{Q}+\frac{1}{\sqrt{Q}}\right)\right]+\left(\sqrt{P^3}-\frac{1}{\sqrt{P^3}}\right)\left[3\left(\sqrt{Q^3}+\frac{1}{\sqrt{Q^3}}\right)+5\left(\sqrt{Q}+\frac{1}{\sqrt{Q}}\right)\right]
	\end{split}\end{equation}
\end{theorem}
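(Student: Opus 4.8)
The identity is the $\psi(-q)$--analogue of the mixed modular equation \eqref{S411}, and I would prove it by running exactly the argument that establishes \eqref{S411}, with the relation \eqref{S313} of Remark \ref{r2} taking over the role played there by \eqref{S311}. So the plan has four stages: from the $B_1$--$B_4$ mixed modular equation \eqref{S37}, together with \eqref{S313} and its image under $q\mapsto q^4$, produce a polynomial identity in $C_1$ and $C_4$; factor it; isolate the factor that vanishes identically on $|q|<1$; and rewrite that factor in the variables $P=C_1C_4$ and $Q=C_1/C_4$.

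Carrying this out, I would first raise \eqref{S37} to a power that removes all half--integer exponents, producing a genuine polynomial relation among $B_1$ and $B_4$ (equivalently among $B_1^3$, $B_4^3$ and $B_1B_4$, as in \eqref{S3312}). By Lemma \ref{l8}, and just as in the proof of Lemma \ref{l7}, each $B_r^3$ can be written through $\varphi(\mp q^r)$ and $\psi(\mp q^r)$; substituting these, then using \eqref{S313} to eliminate the $f(-q^{2k})$--quotients in favour of $C_1$ and its $q\mapsto q^4$ dilation to do the same for $C_4$, turns the relation into a polynomial identity $H(u,v)=0$, where $u:=C_1$ and $v:=C_4$. (Alternatively one may bypass the $B$--level entirely: Lemma \ref{l7} and its $q\mapsto q^4$ dilation give $A_1=\dfrac{1+C_1}{1-C_1}$ and $A_4=\dfrac{1+C_4}{1-C_4}$, so substituting these into the polynomial form of \eqref{S411} and clearing denominators produces the same $H(u,v)=0$.)

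It then remains to factor $H(u,v)$. As in the proofs of \eqref{S21}, \eqref{MSMCKSHM} and \eqref{S01}, exactly one factor is identically zero on $|q|<1$ while the others---a ``conjugate'' factor, analogous to the factors that do not vanish in \eqref{r3} and \eqref{S3113}, together with spurious factors such as powers of $1\mp u$ or $1\mp v$ introduced when clearing denominators---are not; this is settled by evaluating the factors numerically along $\{q_n\}=\{1/(1+n)\}$, on which only the wanted factor vanishes. In that distinguished factor I would then set $P:=uv$ and $Q:=u/v$; every half--integer power of $P$ or $Q$ that appears pairs with another into an integer power of $C_1$ and $C_4$ (exactly as the radical terms in \eqref{S37} and \eqref{S411} do), the branch of each square root being the one with $\sqrt{C_r}\to 0^{+}$ as $q\to 0^{+}$, and collecting terms gives the asserted equation.

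The hard part is the factorization. Since \eqref{S411} is already a large, high--degree polynomial relation in $A_1,A_4$ and \eqref{S37} carries a $\sqrt{P^{3}}$ term, after the substitutions and the clearing of denominators $H(u,v)$ has considerably higher degree and several irreducible factors; performing the factorization, pinning down the correct factor, and then recognising it in the compact symmetric $P$--$Q$ shape with all of its radical terms and numerical coefficients is a lengthy symbolic computation, in practice carried out with a computer algebra system. The one genuinely non--mechanical ingredient is the vanishing test along $\{q_n\}$, which is what actually proves that the selected factor is identically zero on $|q|<1$.
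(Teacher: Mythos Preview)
Your proposal is correct and follows essentially the same route as the paper's one--line proof: start from the $B_1$--$B_4$ relation \eqref{S37}, clear the half--integer exponents, pass to the $\psi$--quotients via \eqref{S313} of Remark~\ref{r2} (in place of \eqref{S311}), factor, select the vanishing factor along $\{q_n\}=\{1/(1+n)\}$, and rewrite in $P=C_1C_4$, $Q=C_1/C_4$. The shortcut you mention---substituting $A_r=(1+C_r)/(1-C_r)$ from Lemma~\ref{l7} directly into the already--established \eqref{S411} and clearing denominators---is a valid alternative the paper does not use; it trades the $B$--level computation for a rational substitution into a known identity, at the cost of a somewhat larger polynomial to factor.
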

\begin{proof}
	The proof of the equation \eqref{psi4} is similar to \eqref{S01}, except that in the place of the equation \eqref{S21}, \eqref{S37} is used .
\end{proof}
\begin{theorem}
	\text{If}\ \ $P:=A_{1}A_{5}$\ \ {and} \ \ $Q:=\dfrac{A_{1}}{A_{5}},$\ \ \  then
	\begin{equation}\begin{split}
			\label{S51}
			&P^4+\frac{1}{P^4}+\left(P^3+\frac{1}{P^3}\right)\left[5\left(Q+\frac{1}{Q}\right)+14\right]+42=Q^3+\frac{1}{Q^3}-15\left(Q+\frac{1}{Q}\right)\\&+\left(P^2+\frac{1}{P^2}\right)\left[4+10\left(Q+\frac{1}{Q}\right)+5\left(Q^2+\frac{1}{Q^2}\right)+\left(Q^3+\frac{1}{Q^3}\right)\right]\\&+\left(P+\frac{1}{P}\right)\left[6+10\left(Q+\frac{1}{Q}\right)-\left(Q^3-\frac{1}{Q^3}\right)\right]+5\left(\sqrt{P^7} -\frac{1}{\sqrt{P^7}}\right)\left(\sqrt Q+\frac{1}{\sqrt Q}\right)\\&+15\left(\sqrt{P^5} -\frac{1}{\sqrt{P^5}}\right)\left(\sqrt Q+\frac{1}{\sqrt Q}\right)-5\left(\sqrt{P^3} -\frac{1}{\sqrt{P^3}}\right)\left[7\left(\sqrt Q+\frac{1}{\sqrt Q}\right)\right.\\&\left.+5\left(\sqrt{Q^3}+\frac{1}{\sqrt{Q^3}}\right)+\left(\sqrt {Q^5}+\frac{1}{\sqrt{Q^5}}\right)\right]+5\left(\sqrt{P} -\frac{1}{\sqrt{P}}\right)\left[11\left(\sqrt Q+\frac{1}{\sqrt Q}\right)\right.\\&\left.+5\left(\sqrt{Q^3}+\frac{1}{\sqrt{Q^3}}\right)+\left(\sqrt {Q^5}+\frac{1}{\sqrt{Q^5}}\right)\right].
		\end{split} \end{equation}
	\end{theorem}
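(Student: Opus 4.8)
The plan is to repeat the argument used to prove \eqref{S01} verbatim, with the degree--$5$ relation \eqref{S38} in place of \eqref{S21}: since $A_1A_5$ and $A_1/A_5$ lie over $B_1B_5$ and $B_1/B_5$ exactly as $A_1A_2$, $A_1/A_2$ lie over $B_1$, $B_2$, the identity \eqref{S38} connecting $B_1B_5$ with $B_1/B_5$ is the correct starting point. First I would record, from Lemma \ref{l8} together with Lemma \ref{l7}, the bridge identity \eqref{S311}; combining \eqref{S311} with its images under the relevant dilations $q\mapsto q^{m}$ expresses the $f(-q)$--quotients occurring in \eqref{S38} in terms of $u:=A_1$ and $v:=A_5$. (The substitution $q\mapsto-q$ followed by cubing is what carries the $f(-q)$ side over to the $\varphi(q)$, $\psi(-q)$ side, exactly as in the proof of Lemma \ref{l7}.) After this step, $B_1B_5$, $B_1/B_5$ and their half--integer powers appearing in \eqref{S38} have become rational in $u$, $v$, $\sqrt u$ and $\sqrt v$.

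The second step is to substitute these expressions into \eqref{S38}, clear the half--integer powers by isolating the ``odd'' part and squaring, and clear denominators. This produces a polynomial identity of the form $F_1(u,v)F_2(u,v)\cdots F_k(u,v)=0$ holding on $|q|<1$, one of whose factors is the desired relation written in $u$, $v$. Since $\varphi(q)$ is zero--free on the unit disc, each $A_r$ is analytic and non--vanishing there, so every $F_j$ is analytic on $|q|<1$; as the product vanishes identically, at least one factor must vanish on a subset of the disc with a limit point, hence identically. Evaluating the factors along the sequence $\{q_n\}=\{1/(1+n)\}$, which accumulates at $0$, then singles out the correct factor: it vanishes at every $q_n$, whereas the remaining factors are already non--zero at $q_1$. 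Hence that factor vanishes identically on $|q|<1$, and setting $P:=uv=A_1A_5$ and $Q:=u/v=A_1/A_5$ and simplifying recasts it as \eqref{S51}.

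The main obstacle is purely computational. Relation \eqref{S38} already carries half--integer powers of $P$ and $Q$ and is of degree four in $P$ and three in $Q$, so after inserting the rational expressions for the $B_1$-- and $B_5$--quotients and clearing roots, the polynomial in $u$, $v$ (and $\sqrt u$, $\sqrt v$) is of very high degree and splits into several factors; isolating the relevant factor and then performing the change of variables so as to match \eqref{S51} term by term is a lengthy manipulation best carried out with a computer algebra system. A subsidiary point to watch is the $q$--power and sign bookkeeping in passing from the $f(-q)$ side to the $\varphi(q)$, $\psi(-q)$ side, so that squaring away the half--integer powers in \eqref{S38} neither introduces nor destroys spurious factors; once the factorisation is correct, the limit--point argument above is routine.
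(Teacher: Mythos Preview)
Your proposal is correct and follows exactly the route the paper takes: the paper's proof of \eqref{S51} consists of the single sentence ``The proof of the equation \eqref{S51} is similar to \eqref{S01}, except that in the place of the equation \eqref{S21}, \eqref{S38} is used,'' and your write-up simply unpacks what that entails---the bridge identity \eqref{S311}, substitution into \eqref{S38}, factorisation, and the sequence argument on $\{1/(1+n)\}$. Your cautionary remarks about sign bookkeeping and the computational burden are apt but add nothing the paper does not already implicitly assume.
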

\begin{proof}
	The proof of the equation \eqref{S51} is similar to \eqref{S01}, except that in the place of the equation \eqref{S21}, \eqref{S38} is used.
\end{proof}
\begin{theorem}
	\text{If}\ \ $P:=A_{1}A_{7}$\ \ {and} \ \ $Q:=\dfrac{A_{1}}{A_{7}},$\ \ \  then
	\begin{equation}\begin{split}
	\label{S71}
	&P^3+\frac{1}{P^3}+14\left(P^2+\frac{1}{P^2}\right)-35\left(P+\frac{1}{P}\right)+42=Q^4+\frac{1}{Q^4}\\& +14\left(Q+\frac{1}{Q}\right)\left[4\left(P+\frac{1}{P}\right)-\left(P^2+\frac{1}{P^2}\right)-6\right]+7\left(Q^2+\frac{1}{Q^2}\right)\\&\times\left(\sqrt P+\frac{1}{\sqrt P}\right)^2+7\left(\sqrt{Q}+\frac{1}{\sqrt{Q}}\right) \left[\left(\sqrt{P}-\frac{1}{\sqrt{P}}\right)-2\left(\sqrt{P^3}-\frac{1}{\sqrt{P^3}}\right)\right.\\&\left.+\left(\sqrt{P^5}-\frac{1}{\sqrt{P^5}}\right)\right]-14\left(\sqrt{Q^5} +\frac{1}{\sqrt{Q^5}}\right)\left(\sqrt P-\frac{1}{\sqrt P}\right)\\&+7\left(\sqrt{Q^3}+\frac{1}{\sqrt{Q^3}}\right)\left[\left(\sqrt{P^3}-\frac{1}{\sqrt{P^3}}\right)-\left(\sqrt{P}-\frac{1}{\sqrt{P}}\right)\right].
	\end{split} \end{equation}
\end{theorem}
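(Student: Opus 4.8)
The plan is to mimic the proof of \eqref{S01}, with the degree-$7$ modular equation \eqref{S39} playing the role that \eqref{S21} plays there. Recall that \eqref{S39} is a relation between $B_1B_7$ and $B_1/B_7$; since each square-root term occurring in it is multiplied only by another square-root term with which its product is an integer power of $B_1$ and $B_7$ (for instance $\sqrt{P^3}\cdot\sqrt{Q^3}=B_1^3$ and $\sqrt{P^3}\cdot\sqrt{Q}=B_1^2B_7$), \eqref{S39} is, after clearing denominators, an honest polynomial identity in $B_1$ and $B_7$. The goal is to convert this into a polynomial identity in $A_1$ and $A_7$ and then factor it.

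The conversion device is \eqref{S311}. Derived from Lemma \ref{l8} and Lemma \ref{l7}, it expresses a cube of an $f(-q)$-quotient as the rational function $u(u-1)^2/(u+1)^2$ of $u=A_1$; replacing $q$ by $q^7$ throughout gives the analogous identity with $w=A_7$. Cubing \eqref{S39} first (exactly as \eqref{S21} is cubed to produce \eqref{S3312}, so that the $f(-q)$-quotients enter as cubes) and then substituting the two rational expressions coming from \eqref{S311} and clearing denominators, one arrives at a polynomial identity $\Phi(u,v)=0$ in $u=A_1$, $v=A_7$. As in \eqref{S3113}, $\Phi$ factors into several polynomial pieces, one of which is the sought relation.

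To finish, one argues exactly as in the proofs of Lemma \ref{l7} and of \eqref{S01}: precisely one factor of $\Phi$ vanishes identically on $|q|<1$, and this is confirmed by checking along the sequence $q_n=1/(1+n)$ that the intended factor is numerically zero there while the remaining factors are not, and then invoking the identity theorem for analytic functions. Finally, putting $P:=uv=A_1A_7$ and $Q:=u/v=A_1/A_7$, so that $u^2=PQ$, $v^2=P/Q$, $u=\sqrt{PQ}$ and $v=\sqrt{P/Q}$, and grouping reciprocal monomials, the odd-degree terms of $\Phi$ reorganise into the combinations $\sqrt{P^k}\mp 1/\sqrt{P^k}$ and $\sqrt{Q^k}+1/\sqrt{Q^k}$ that appear in \eqref{S71}.

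The principal obstacle is the size of the symbolic computation rather than any conceptual point: because \eqref{S39} is of considerably higher degree than \eqref{S21} (degree $4$ in $Q$ and $3$ in $P$ in its displayed form), the polynomial $\Phi(u,v)$ produced after substitution is much larger than \eqref{S3113}, so obtaining its complete factorisation and correctly singling out the identically-vanishing factor is realistic only with a computer algebra system. One must also keep track of the fact that \eqref{S311} is stated with the $q\mapsto -q$ normalisation used in Lemma \ref{l7}, and apply it consistently when eliminating both $B_1$ and $B_7$. If one prefers to bypass the unwieldy factorisation, an alternative is to note that, after clearing denominators, each side of \eqref{S71} is a modular function on a suitable congruence subgroup, so that the identity follows once sufficiently many terms of the $q$-expansions of the two sides are shown to agree.
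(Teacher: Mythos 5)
Your proposal follows essentially the same route as the paper, whose entire proof of \eqref{S71} is the remark that one repeats the argument for \eqref{S01} with \eqref{S39} in place of \eqref{S21}: convert \eqref{S39} into a polynomial relation in $B_1^3$ and $B_7^3$, substitute the rational expressions supplied by \eqref{S311} (at $q$ and at $q^7$) to pass to $u=A_1$ and $v=A_7$, factor the result, isolate the identically vanishing factor by testing along $q_n=1/(1+n)$, and rewrite in terms of $P=uv$, $Q=u/v$. The one caveat is that, unlike for \eqref{S21}, literally cubing \eqref{S39} does not by itself yield a relation in the cubes alone --- one must genuinely eliminate the non-cube monomials in $B_1,B_7$ (e.g.\ by a resultant, or by multiplying together the conjugates under $B_1\mapsto\omega B_1$, $B_7\mapsto\omega' B_7$ for cube roots of unity) --- but this is exactly the computational burden your closing paragraph already acknowledges.
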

\begin{proof}
	The proof of the equation \eqref{S71} is similar to \eqref{S01}, except that in the place of the equation \eqref{S21}, \eqref{S39} is used.
\end{proof}

\section{Explicit Evaluation of ratios of theta--functions $\varphi(q)$}\label{S4}
 In the present section, we explicitly evaluate $h_{k,n}$ by using the modular relations established in Section \ref{S3}. By using the definition of $h_{k,n}$ with $k=3$  and $k=5$ respectively the equations \eqref{NDB5} and \eqref{SR3} respectively takes the form.
 \begin{lemma} The following identities hold for any positive real number $n$:
 	\begin{equation}\begin{split}\label{NDBh5}
 	&3\left(h^2_{3,n}h^2_{3,25n}+\dfrac{1}{(h^2_{3,n}h^2_{3,25n}}\right)=\left(\frac{h_{3,25n}}{h_{3,n}}\right)^3  +5\left(\frac{h_{3,25n}}{h_{3,n}}\right)^2+5\left(\frac{h_{3,n}}{h_{3,25n}}\right)^2\\&+5\left(\frac{h_{3,25n}}{h_{3,n}} -\frac{h_{3,n}}{h_{3,25n}}\right)-\left(\frac{h_{3,n}}{h_{3,25n}}\right)^3
 	\end{split}\end{equation}
 	and
	\begin{equation}\label{SRh3}
	\sqrt 5h_{5,n}h_{5,9n}+\dfrac{\sqrt 5}{h_{5,n}h_{5,9n}}=\left(\frac{h_{5,9n}}{h_{5,n}}\right)^2+3\left(\frac{h_{5,9n}}{h_{5,n}}\right)+3\left(\frac{h_{5,n}}{h_{5,9n}}\right)-\left(\frac{h_{5,n}}{h_{5,9n}}\right)^2.
	\end{equation}
\end{lemma}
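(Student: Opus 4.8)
The plan is to derive both identities by specializing the classical ``mixed'' modular equations \eqref{NDB5} and \eqref{SR3} at $q=e^{-\pi\sqrt{n/k}}$ --- with $k=3$ for \eqref{NDBh5} and $k=5$ for \eqref{SRh3} --- and then reading off the result through the definition \eqref{hkn}. The mechanism is the elementary observation that if $q=e^{-\pi\sqrt{n/k}}$ then, for any positive integer $r$, $q^{r}=e^{-\pi\sqrt{r^{2}n/k}}$ and $q^{rk}=e^{-\pi\sqrt{r^{2}nk}}$, so that $\varphi(q^{r})/\varphi(q^{rk})=k^{1/4}h_{k,\,r^{2}n}$. Hence any quotient $\varphi(q^{a})/\varphi(q^{b})$ with $b=ka$ collapses, up to the factor $k^{1/4}$, to one of Yi's parameters.

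For \eqref{NDBh5} I would put $k=3$, $q=e^{-\pi\sqrt{n/3}}$ in \eqref{NDB5}. Taking $r=1$ gives $P=\varphi(q)/\varphi(q^{3})=3^{1/4}h_{3,n}$, and taking $r=5$ gives $Q=\varphi(q^{5})/\varphi(q^{15})=3^{1/4}h_{3,25n}$. Therefore $(PQ)^{2}=3\,h_{3,n}^{2}h_{3,25n}^{2}$, whence $(PQ)^{2}+9/(PQ)^{2}=3\bigl(h_{3,n}^{2}h_{3,25n}^{2}+1/(h_{3,n}^{2}h_{3,25n}^{2})\bigr)$, while $Q/P=h_{3,25n}/h_{3,n}$; substituting these into \eqref{NDB5} produces \eqref{NDBh5} verbatim.

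For \eqref{SRh3} I would put $k=5$, $q=e^{-\pi\sqrt{n/5}}$ in \eqref{SR3}, using it in the form in which $P$ and $Q$ pair the moduli of orders $1,5$ and of orders $3,15$ (i.e.\ $P=\varphi(q)/\varphi(q^{5})$, $Q=\varphi(q^{3})/\varphi(q^{15})$); this pairing is exactly what is needed for each quotient to reduce to an $h_{5,\cdot}$. Then $r=1$ gives $P=5^{1/4}h_{5,n}$ and $r=3$ gives $Q=5^{1/4}h_{5,9n}$, so $PQ=\sqrt5\,h_{5,n}h_{5,9n}$, hence $PQ+5/(PQ)=\sqrt5\,h_{5,n}h_{5,9n}+\sqrt5/(h_{5,n}h_{5,9n})$, and $Q/P=h_{5,9n}/h_{5,n}$; feeding this into \eqref{SR3} yields \eqref{SRh3}.

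There is no genuine obstacle here: the lemma is a change of variables, and the only thing demanding care is the exponent and constant bookkeeping --- verifying that $q^{r}$ at $q=e^{-\pi\sqrt{n/k}}$ is precisely the argument $e^{-\pi\sqrt{r^{2}n/k}}$ required by \eqref{hkn} with $n\mapsto r^{2}n$, that its partner $q^{rk}$ matches $e^{-\pi\sqrt{r^{2}nk}}$, and that the powers of $k^{1/4}$ combine to turn the constant $9$ (resp.\ $5$) on the left of \eqref{NDB5} (resp.\ \eqref{SR3}) into $3$ (resp.\ $\sqrt5$). The single place where an incorrect choice would derail the computation is the pairing of moduli in \eqref{SR3} for the case $k=5$; with the pairing $\{1,5\},\{3,15\}$ the fourth roots cancel cleanly, whereas any other pairing leaves residual $15^{1/4}$-type factors and no polynomial identity in $h_{5,n},h_{5,9n}$ survives.
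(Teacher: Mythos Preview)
Your approach is correct and coincides with the paper's own argument: the paper simply says that \eqref{NDBh5} and \eqref{SRh3} are obtained by transcribing \eqref{NDB5} and \eqref{SR3} through the definition \eqref{hkn} with $k=3$ and $k=5$ respectively, which is exactly the substitution $q=e^{-\pi\sqrt{n/k}}$ you carry out. Your observation that the pairing in \eqref{SR3} must be $P=\varphi(q)/\varphi(q^{5})$, $Q=\varphi(q^{3})/\varphi(q^{15})$ is well taken; the paper's stated $Q=\varphi(q^{5})/\varphi(q^{15})$ in Lemma~2.9 is evidently a typo, since only the former pairing makes the transcription to $h_{5,n}$, $h_{5,9n}$ possible.
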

 \begin{lemma}\cite{jy2} For all positive real number $k, a, b, c, m, n,$ and $d$, with $ab=cd,$ we have 
 	\begin{align}
 	&h_{a,b}h_{kc,kd}=h_{ka,kb}h_{c,d},\label{jy6}\\
  	&h_{k,n/m}h_{m,nk}=h_{n,mk}.\label{jy7}
  	\end{align}
  \end{lemma}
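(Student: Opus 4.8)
The final statement to prove is Lemma~\ref{jy6}--\ref{jy7} (the two product/index relations for $h_{a,b}$), attributed to Yi~\cite{jy2}. Since the paper defines $h_{k,n}$ only through the diagonal shorthand \eqref{hkn}, the first move is to lift to the two-index object $h_{a,b}:=\dfrac{\varphi(e^{-\pi\sqrt{b/a}})}{a^{1/4}\varphi(e^{-\pi\sqrt{ab}})}$, which reduces to \eqref{hkn} when $b=nk$, $a=k$; one checks $h_{k,nk}=h_{k,n}$ so no inconsistency arises. With this notation both claims become statements purely about the quotient
\[
R(x,y):=\frac{\varphi(e^{-\pi x})}{\varphi(e^{-\pi y})},
\]
evaluated at various arguments, together with bookkeeping of the quartic prefactors. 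Concretely, writing $u=\sqrt{b/a}$ and $v=\sqrt{ab}$ for each occurring pair, the plan is to reduce every $h$-symbol to the form $(\text{prefactor})\cdot R(u,v)$ and then verify the identity argument-by-argument.

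For \eqref{jy6}: the hypothesis $ab=cd$ forces $\sqrt{ab}=\sqrt{cd}$, so the ``$v$''-argument of $\varphi$ in the denominator matches up on the two sides after the $k$-scaling, and likewise $\sqrt{(ka)(kb)}=k\sqrt{ab}=k\sqrt{cd}=\sqrt{(kc)(kd)}$. The small-argument arguments are $\sqrt{b/a}$, $\sqrt{kd/kc}=\sqrt{d/c}$ on the left and $\sqrt{kb/ka}=\sqrt{b/a}$, $\sqrt{d/c}$ on the right — the same multiset. The quartic prefactors are $(a\cdot kc)^{1/4}$ on the left versus $(ka\cdot c)^{1/4}$ on the right, which are equal. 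Hence \eqref{jy6} is a pure rearrangement of identical factors; I would present it as a one-line computation after substituting the definitions. For \eqref{jy7}: here $h_{k,n/m}h_{m,nk}$ has $\varphi$-arguments $e^{-\pi\sqrt{n/(mk)}}$, $e^{-\pi\sqrt{nk/m}}$ upstairs and $e^{-\pi\sqrt{nk/m}}$, $e^{-\pi\sqrt{nkm}}$ downstairs (using $\sqrt{(n/m)k}=\sqrt{nk/m}$ and $\sqrt{m\cdot nk}=\sqrt{nkm}$), so the common factor $\varphi(e^{-\pi\sqrt{nk/m}})$ cancels, leaving $\dfrac{\varphi(e^{-\pi\sqrt{n/(mk)}})}{\varphi(e^{-\pi\sqrt{nkm}})}$ up to the prefactor $(k m)^{1/4}$; this is exactly $h_{n,mk}$ since $\sqrt{mk/n}$ and $\sqrt{n\cdot mk}$ are the two arguments there and the prefactor is $n^{1/4}$... wait, one must check $(km)^{1/4}$ versus $n^{1/4}$, which is \emph{not} generally an identity — so the genuine content is that the index-juggling also rebalances the prefactor correctly, and I would track this carefully: the correct reading is $\sqrt{n/(mk)}=\sqrt{(mk/n)}\cdot(n/(mk))$... let me instead just verify $\sqrt{n/(mk)}=1/\sqrt{mk/n}$ so the two $\varphi$ arguments are reciprocal-exponent pairs matching $h_{n,mk}$'s pair $\{e^{-\pi\sqrt{mk/n}},e^{-\pi\sqrt{nmk}}\}$ only after also using a modular transformation of $\varphi$; this is where the real work sits.

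Therefore the key technical input, and the step I expect to be the main obstacle, is Ramanujan's transformation formula for $\varphi$ under $q\mapsto$ its ``conjugate'' modulus — equivalently the fact (from \eqref{ee11}) that $\varphi^2(e^{-\pi t})=\dfrac{2}{\pi}K(k)$ with $e^{-\pi t}=e^{-\pi K'/K}$, combined with the Landen/reciprocal relation $t\leftrightarrow 1/t$ swapping $K\leftrightarrow K'$. Using $\varphi^2(e^{-\pi/t})=t\,\varphi^2(e^{-\pi t})$ (the standard theta transformation), each $h$-symbol can be rewritten with its arguments inverted at the cost of an explicit power of the relevant modulus, and then the prefactor discrepancy $(km)^{1/4}$ vs.\ $n^{1/4}$ is absorbed. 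The plan is: (i) record the theta transformation law as a sub-lemma (citing Berndt~\cite{BCB1} if needed, or deriving it from \eqref{ee11}); (ii) express all six $\varphi$-factors in \eqref{jy7} in a common ``normalized'' form; (iii) cancel and collect, checking the prefactor balance using $ (n/m)\cdot(n k) = (n/(mk))\cdot(nkm)\cdot\frac{1}{?}$-type arithmetic identities among the indices; (iv) conclude. Equation \eqref{jy6}, by contrast, needs no transformation law and follows by direct substitution, so I would dispatch it first as a warm-up. The whole proof is elementary once the transformation law is in hand; the only place to be careful is not to drop or double-count a quartic root of an index.
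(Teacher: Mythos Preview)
Your argument is correct in substance. The paper does not supply a proof of this lemma at all; it simply cites Yi~\cite{jy2}, so there is nothing in the paper to compare against. Your direct verification from the definition \eqref{hkn} is exactly the standard argument (and is what one finds in Yi's work): identity \eqref{jy6} is a literal rearrangement of factors using $ab=cd$, and identity \eqref{jy7} reduces, after the telescoping cancellation you describe, to the Jacobi transformation $\varphi(e^{-\pi/t})=\sqrt{t}\,\varphi(e^{-\pi t})$ with $t=\sqrt{n/(mk)}$, which absorbs the apparent prefactor mismatch $(km)^{1/4}$ versus $n^{1/4}$ precisely.

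Two small points. First, there is no ``lift'' needed: the paper's $h_{k,n}$ in \eqref{hkn} is already a genuine two-parameter object, not a diagonal shorthand, so your $h_{a,b}$ is literally the same symbol with relabelled indices (and the claim $h_{k,nk}=h_{k,n}$ is false and unnecessary --- drop that sentence). Second, your write-up for \eqref{jy7} would read more cleanly if you set $t=\sqrt{n/(mk)}$ at the outset and state the transformation law once, rather than discovering mid-paragraph that the prefactors do not match by pure bookkeeping; the content is right but the exposition meanders.
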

\begin{theorem} We have
\begin{eqnarray}
h_{3,15}&=&3^{1/4}(\sqrt{5}-2)^{1/4}\lb(\frac{\sqrt{5}-\sqrt{3}}{2}\rb)^{1/2}(1+\sqrt{3})^{1/2}, \label{S41}\\
h_{3,5/3}&=&3^{-1/4}(2+\sqrt{5})^{1/4}\lb(\frac{\sqrt{5}-\sqrt{3}}{2}\rb)^{1/2}(1+\sqrt{3})^{1/2}\label{S44}.
\end{eqnarray}
\end{theorem}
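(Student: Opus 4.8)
The plan is to pin down $h_{3,15}$ and $h_{3,5/3}$ by computing their product and their ratio separately and then extracting each parameter by a square root. The first step is the product formula
\[
h_{3,15}\,h_{3,5/3}=h_{5,9}.
\]
This follows from the multiplicative transformation \eqref{jy7}: taking $k=3$, $m=\tfrac15$, $n=3$ in $h_{k,n/m}h_{m,nk}=h_{n,mk}$ gives $h_{3,15}\,h_{1/5,9}=h_{3,3/5}$. Using the reciprocity relations $h_{1/k,n}=1/h_{k,n}$ and $h_{k,1/n}=1/h_{k,n}$ — standard properties of these parameters, deducible from \eqref{jy6}--\eqref{jy7} together with $h_{1,t}=1$ (see \cite{jy2}) — we substitute $h_{1/5,9}=1/h_{5,9}$ and $h_{3,3/5}=1/h_{3,5/3}$ to obtain the displayed identity.

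Next I would evaluate $h_{5,9}$ using \eqref{SRh3}. Specialising to $n=1$ and using $h_{5,1}=1$, the relation \eqref{SRh3} becomes, after clearing denominators, the quartic $y^{4}+(3-\sqrt5)y^{3}+(3-\sqrt5)y-1=0$ in $y:=h_{5,9}$, which factors as
\[
\bigl(y^{2}+1\bigr)\bigl(y^{2}+(3-\sqrt5)\,y-1\bigr)=0 .
\]
Since $h_{5,9}$ is a positive real number with $h_{5,9}<1$ (by monotonicity of $n\mapsto h_{5,n}$ together with $h_{5,1}=1$, or simply from the numerical value $h_{5,9}\approx 0.6885$), it is the positive root of the quadratic factor; denesting $\sqrt{18-6\sqrt5}=\sqrt{18-2\sqrt{45}}=\sqrt{15}-\sqrt3$ gives
\[
h_{5,9}=\frac{\sqrt5+\sqrt{15}-\sqrt3-3}{2}=\frac{(\sqrt5-\sqrt3)(1+\sqrt3)}{2}=:w .
\]

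For the ratio I would apply \eqref{NDBh5} with $n=3/5$: then $25n=15$, so the two parameters occurring in \eqref{NDBh5} are $h_{3,n}=h_{3,3/5}$ and $h_{3,25n}=h_{3,15}$. Writing $r:=h_{3,15}/h_{3,5/3}$ and using $h_{3,3/5}=1/h_{3,5/3}$ together with the product formula (so that $h_{3,25n}/h_{3,n}=h_{3,15}h_{3,5/3}=w$ and $h_{3,n}^{2}h_{3,25n}^{2}=r^{2}$), the identity \eqref{NDBh5} collapses to
\[
3\Bigl(r^{2}+\tfrac1{r^{2}}\Bigr)=w^{3}+5w^{2}+\tfrac{5}{w^{2}}+5\Bigl(w-\tfrac1w\Bigr)-\tfrac1{w^{3}} .
\]
A short computation, reducing the right-hand side modulo the minimal polynomial $w^{2}+(3-\sqrt5)w-1=0$ of $w$, shows it equals $10\sqrt5-16$, so $r^{2}$ is a root of $3t^{2}-(10\sqrt5-16)t+3=0$, i.e.\ $r^{2}\in\{\,3(\sqrt5-2),\,(2+\sqrt5)/3\,\}$. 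Since $15>5/3$ forces $h_{3,15}<h_{3,5/3}$ (again by monotonicity, or from $r\approx 0.84$), we have $r<1$ and hence $r^{2}=3(\sqrt5-2)$. Finally, from $h_{3,15}^{2}=r\,w$ and $h_{3,5/3}^{2}=w/r$, simplifying the latter via $(\sqrt5-2)(\sqrt5+2)=1$ and taking positive square roots (legitimate as every $h_{k,n}>0$), one arrives at exactly \eqref{S41} and \eqref{S44}.

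The only genuinely delicate point is the algebraic simplification in the ratio step — reducing that degree-six (in $w$) expression to the single surd $10\sqrt5-16$ — which is where essentially all the computation is concentrated, though it is routine once one works modulo the minimal polynomial of $w$. Apart from that, the proof requires only the standard care in selecting the correct branch at the two quadratic steps, which I would justify by the positivity of the $h$'s together with the monotonicity of $n\mapsto h_{k,n}$ (or, more cheaply, by comparison with the numerical values of the parameters involved).
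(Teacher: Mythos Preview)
Your argument is correct, but it follows a genuinely different route from the paper's. The paper obtains the product $h_{3,15}h_{3,5/3}$ by specialising its newly-derived mixed modular equation \eqref{S31} at $n=1/15$ (which, after the reciprocity $h_{3,n}h_{3,1/n}=1$, gives a quadratic in $t=x-1/x$ with $x=h_{3,1/15}h_{3,3/5}$), and then feeds that product into \eqref{NDBh5} with $n=1/15$ to extract $h_{3,1/15}h_{3,5/3}$; the two quantities are then combined. You instead bypass \eqref{S31} entirely: you recognise via \eqref{jy7} that $h_{3,15}h_{3,5/3}=h_{5,9}$ and evaluate $h_{5,9}$ directly from Ramanujan's identity \eqref{SRh3} at $n=1$, and only then invoke \eqref{NDBh5} (at $n=3/5$, which is the reciprocal of the paper's choice) for the remaining ratio. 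The upshot is that your proof is more elementary --- it uses only results already available in the literature prior to this paper --- whereas the paper's proof is designed precisely to exhibit \eqref{S31} in action; from the paper's viewpoint that is the whole purpose of the section, so your route, though valid, sidesteps the intended application. One incidental remark: the product identity $h_{3,15}h_{3,5/3}=h_{5,9}$ drops out of \eqref{jy7} in a single step with $k=m=3$, $n=5$, so the detour through $h_{1/5,9}$ and the reciprocity $h_{1/k,n}=1/h_{k,n}$ is unnecessary.
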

\begin{proof}
	Employing \eqref{hkn} in \eqref{S31} with $n=1/15$ and recalling that $h_{3,n}h_{3,1/n}=1,$ we obtain
	\begin{equation}\label{S45}
	t^2-6t+4=0,\,\,\,\text{where}\,\,\,t:=x-\frac{1}{x}\,\,\text{and}\,\,x:=h_{3,1/15}h_{3,3/5}.
	\end{equation}
	Since $0<t<1,$ we find that
	\begin{equation}\label{S46}
	x-\frac{1}{x}=3-\sqrt 5.
	\end{equation}
	As $x>1,$ on solving \eqref{S46}, we get
	 \begin{equation}\label{S47}
	 h_{3,1/15}h_{3,3/5}=\frac{(\sqrt 5+\sqrt 3)(\sqrt 3-1)}{2}.
	 \end{equation}
	 Now setting $n=1/15$ in \eqref{NDBh5} and using \eqref{S47}, we deduce that
	 \begin{equation}\label{S48}
	 	(3s^2-2-\sqrt 5)(s^2+6-3\sqrt 5)=0, \,\,\,\text{where}\,\,\,s:=h_{3,1/15}h_{3,5/3}.
	 \end{equation}
	 Since $s>1$, we find that
	 \begin{equation}\label{S49}
	 h_{3,1/15}h_{3,5/3}=\frac{(2+\sqrt 5)^{1/2}}{\sqrt 3}.
	 \end{equation}
	  Using \eqref{S47} and \eqref{S49}, we obtain \eqref{S41} and \eqref{S44}.
\end{proof}

\begin{theorem}\label{42} We have
	\begin{eqnarray}
	h_{3,20}&=&(\sqrt 2-1)\left\{(\sqrt 5+2)b\right\}^{1/2},\label{Sh20}\\	
	h_{3,4/5}&=&(\sqrt 2+1)\left\{(\sqrt 5-2)b\right\}^{1/2},\label{Sh201}\\
	h_{5,12}&=&(\sqrt{2}-1)\left\{(\sqrt{5}+2)a\right\}^{1/2},\label{Sh202}\\
	h_{5,4/3}&=&(\sqrt{2}+1)\left\{(\sqrt{5}-2)a\right\}^{1/2}\label{Sh203},\\
	h_{4,15}&=&\sqrt{ab},\label{ji73}\\
	h_{4,5/3}&=&\sqrt{\dfrac{b}{a}},\label{ji74}
	\end{eqnarray}
	\text{where}
	\begin{align*}
	 &a=\sqrt{11689-3696\sqrt{10}}-\sqrt{11688-3696\sqrt{10}},\\
	 &b=\sqrt{760-240\sqrt{10}}-\sqrt{759-240\sqrt{10}}.
	\end{align*}	
\end{theorem}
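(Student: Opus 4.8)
The plan is to obtain the six evaluations in three stages: the ``mixed'' modular equation \eqref{S411} will supply the relevant \emph{ratios} of the theta‑quotients, the auxiliary identities \eqref{NDBh5} and \eqref{SRh3} will supply the \emph{products}, and the transfer relations \eqref{jy6}--\eqref{jy7} will then descend from the level‑$3$ and level‑$5$ parameters to the level‑$4$ ones. The only facts I will need about the definition \eqref{hkn} are $h_{k,1}=1$, $h_{k,n}h_{k,1/n}=1$, and the two readings $A_1=h_{3,n}/h_{3,25n}=h_{5,5n/3}/h_{5,15n}$ and $A_4=A_1(q^4)=h_{3,16n}/h_{3,400n}$, which follow from taking $q=e^{-\pi\sqrt{n/3}}$ and using $\varphi(e^{-\pi/t})=\sqrt t\,\varphi(e^{-\pi t})$.

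\emph{Ratios.} First I would evaluate at $q=e^{-\pi/\sqrt{60}}$ (that is, $n=1/20$ in the level‑$3$ reading), where $A_1=h_{3,1/20}/h_{3,5/4}=h_{3,4/5}/h_{3,20}=A_4$, so that in \eqref{S411} we have $Q=1$ while $P=A_1A_4=(h_{3,4/5}/h_{3,20})^{2}=(h_{5,4/3}/h_{5,12})^{2}$ (the two readings of $A_1$; in particular $h_{3,4/5}/h_{3,20}=h_{5,4/3}/h_{5,12}$). Putting $Q=1$ turns every $Q$‑factor into $2$; since each surviving $P$‑term has the form $P^{j}+P^{-j}$ or $\sqrt{P^{2j+1}}-\sqrt{P^{-(2j+1)}}$, the identity \eqref{S411} collapses to a polynomial equation in $t:=\sqrt P-1/\sqrt P$. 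As this polynomial has rational coefficients, the quadratic vanishing at our $q$ is $t^{2}+24t-16$ (roots $-12\pm4\sqrt{10}$; that this, rather than another factor, is the operative one is checked by the standard evaluation along $q_m=1/(1+m)$). Since $0<4/5<20$ gives $\sqrt P=h_{3,4/5}/h_{3,20}>1$, one has $t=4\sqrt{10}-12$; then solving $w^{2}-(4\sqrt{10}-12)w-1=0$ for $w>1$ and de‑nesting $\sqrt{308-96\sqrt{10}}=6\sqrt5-8\sqrt2$ yields
\[
\rho:=\frac{h_{3,4/5}}{h_{3,20}}=\frac{h_{5,4/3}}{h_{5,12}}=(3+2\sqrt2)(\sqrt5-2).
\]

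\emph{Products.} Next I would put $n=4/5$ in \eqref{NDBh5}: then $h_{3,n}h_{3,25n}=h_{3,4/5}h_{3,20}=:b$ and $h_{3,25n}/h_{3,n}=1/\rho$, so \eqref{NDBh5} becomes $3(b^{2}+b^{-2})=(\rho^{-3}-\rho^{3})+5(\rho^{2}+\rho^{-2})+5(\rho^{-1}-\rho)$. Using $\rho+\rho^{-1}=6\sqrt5-8\sqrt2$ and $\rho-\rho^{-1}=4\sqrt{10}-12$, the right side simplifies to $9114-2880\sqrt{10}$, so $b^{2}+b^{-2}=3038-960\sqrt{10}$, i.e.\ $(b+b^{-1})^{2}=4(760-240\sqrt{10})$ and $(b^{-1}-b)^{2}=4(759-240\sqrt{10})$; since $0<b<1$ this forces $b=\sqrt{760-240\sqrt{10}}-\sqrt{759-240\sqrt{10}}$, the parameter $b$ of the statement. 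In exactly the same way, \eqref{SRh3} with $n=4/3$ (so that $h_{5,n}h_{5,9n}=h_{5,12}h_{5,4/3}=:a$ and $h_{5,9n}/h_{5,n}=1/\rho$) becomes $\sqrt5\,(a+a^{-1})=\rho^{-2}+3\rho^{-1}+3\rho-\rho^{2}=154\sqrt5-240\sqrt2$, hence $a+a^{-1}=154-48\sqrt{10}$ and, since $0<a<1$, $a=\sqrt{11689-3696\sqrt{10}}-\sqrt{11688-3696\sqrt{10}}$.

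\emph{Assembly.} Finally, combining $h_{3,4/5}/h_{3,20}=\rho$ with $h_{3,4/5}h_{3,20}=b$ gives $h_{3,4/5}^{2}=\rho b$ and $h_{3,20}^{2}=b/\rho$; writing $\rho=(\sqrt2+1)^{2}(\sqrt5-2)$ and $\rho^{-1}=(\sqrt2-1)^{2}(\sqrt5+2)$ and taking positive square roots yields \eqref{Sh20} and \eqref{Sh201}, and the identical computation with $a$ for $b$ yields \eqref{Sh202} and \eqref{Sh203}. For the level‑$4$ parameters, \eqref{jy7} gives $h_{3,20}\,h_{5,4/3}=h_{4,15}$ and $h_{3,20}\,h_{4,5/3}=h_{5,12}$ (take $(k,m,n)=(5,3,4)$ and $(4,3,5)$), and substituting the values just found gives \eqref{ji73} and \eqref{ji74}. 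The main obstacle will be the first stage: expanding \eqref{S411} at $Q=1$ into a single polynomial in $t$ and performing the factorization that isolates $t^{2}+24t-16$ (and confirming it is the right factor). The remaining two stages are routine, provided one is careful with the de‑nesting of the surds and with the sign choices forced by $\sqrt P>1$ and $0<a,b<1$.
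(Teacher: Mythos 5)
Your proposal is correct and follows essentially the same route as the paper: the quadratic $t^2+24t-16=0$ for the ratio $h_{3,4/5}/h_{3,20}$ extracted from \eqref{S411} at $n=1/20$, the products from \eqref{NDBh5} and \eqref{SRh3}, the transfer of the ratio to level $5$ (your ``two readings of $A_1$'' is the same fact the paper obtains from \eqref{jy6}), and the descent to $h_{4,15}$, $h_{4,5/3}$ via \eqref{jy7}. Note only that your (and the paper's own) final step actually yields $h_{4,5/3}=h_{5,12}/h_{3,20}=\sqrt{a/b}$, so the printed \eqref{ji74} has $a$ and $b$ interchanged.
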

\begin{proof}
	Employing \eqref{hkn} in \eqref{S411} with $n=1/20$ and recalling that $h_{3,n}h_{3,1/n}=1,$ we obtain
	\begin{equation}\label{pSh20}
	t^2+2+24t-18=0,\,\,\,\text{where}\,\,\,t:=x-\frac{1}{x}\,\,\text{and}\,\,x:=h_{3,1/20}h_{3,4/5}.
	\end{equation}
	Since $t>1$, we have
	\begin{equation}
	x-\frac{1}{x}=4\sqrt{10}-12.
	\end{equation}
	On solving the above equation for $x$ and noting that $x>0$, we obtain
	\begin{equation}\label{pSh21}
	h_{3,1/20}h_{3,4/5}=(\sqrt 2+1)^2(\sqrt 5-2).
	\end{equation}
	 Now setting $n=1/20$ in \eqref{NDBh5} and using \eqref{pSh21}, we deduce that
	 \begin{equation}
	 (s^2+(12\sqrt{10}-40)s+1)(s^2+(40-12\sqrt{10})s+1)=0,\,\,\text{where}\,\,s=h_{3,1/20}h_{3,5/4}.
	 \end{equation}
	 Since $s>1$, we have
	 \begin{equation}\label{pSh214}
	 h_{3,1/20}h_{3,5/4}=\sqrt{760-240\sqrt{10}}+\sqrt{759-240\sqrt{10}}.
	 \end{equation}
	 Using \eqref{pSh214} and \eqref{pSh21}, we arrive at \eqref{Sh20} and \eqref{Sh201}. 
	 
	 Now we proceed to prove $h_{5,12}$ and $h_{5,3/4}$. Setting $a = $5, $b = 12$, $c = 3$, $d = 20$ and $k = 1/4$ in \eqref{jy6} and using the fact $h_{k,n}=h_{n,k},$ we find that
	 \begin{equation}\label{h520}
	 h_{3,20}h_{3,5/4}=h_{5,12}h_{5,3/4}.
	 \end{equation}
	 From \eqref{pSh21} and \eqref{h520}, we have
	 \begin{equation}\label{h5201}
	 h_{5,1/12}h_{5,4/3}=(\sqrt 2+1)^2(\sqrt 5-2).
	 \end{equation}
	 Now, setting $n=1/12$ in the equation \eqref{SRh3} and using \eqref{h5201}, we deduce that 
	 \begin{equation}\label{h5202}
	 s^2+(48\sqrt{10}-154)s+1=0, \,\,\text{where}\,\,s:=h_{5,1/12}h_{5,3/4}.
	 \end{equation}
	 Since $s>1$, on solving the above equation \eqref{h5202}, we get
	 \begin{eqnarray}\label{h5203}
	 h_{5,1/12}h_{5,3/4}=\sqrt{11689-3696\sqrt{10}}+\sqrt{11688-3696\sqrt{10}}.
	 \end{eqnarray}
	 By \eqref{h5203}, \eqref{h5201} and the fact $h_{k,n}h_{k,1/n}=1$, we obtain \eqref{Sh202} and \eqref{Sh203}.
	 
	 Again,	setting $k=3, n=4$ and $m=5$, in \eqref{jy7}, we have
	 	\begin{equation}\label{ji71}
	 	h_{4,15}=h_{3,4/5}h_{5,12}.
	 	\end{equation}
	 	Setting $k=4, n=5$ and $m=3$, in \eqref{jy7}, we have
	 	\begin{equation}\label{ji72}
	 	h_{4,5/3}=h_{5,12}h_{3,1/20}.
	 	\end{equation}
	 	From the equations \eqref{ji71} and \eqref{ji72}, we arrive at \eqref{ji73} and \eqref{ji74}.
\end{proof}

\begin{theorem} We have
	\begin{eqnarray}
	&&h_{5,15}=2^{-1/6}(5^{1/6}-\sqrt{5^{1/3}-2^{2/3}})^{1/2}\left(\frac{-5}{3}+\frac{10^{1/3}}{3}+\frac{10^{2/3}}{3}\right)^{1/2}, \label{S512}\\&&h_{5,5/3}=2^{-1/6}(5^{1/6}+\sqrt{5^{1/3}-2^{2/3}})^{1/2}\left(\frac{-5}{3}+\frac{10^{1/3}}{3}+\frac{10^{2/3}}{3}\right)^{1/2}	\label{S513}.\end{eqnarray}
\end{theorem}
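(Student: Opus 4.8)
The plan is to follow the scheme of the earlier evaluation theorems, combining the degree-$5$ modular equation \eqref{S51} with the degree-$3$ relation \eqref{SRh3}. First I would employ the definition \eqref{hkn} (with $k=5$) in \eqref{S51} and take $n=1/15$; equivalently, put $q=e^{-\pi/\sqrt{75}}$. Recalling that $h_{5,m}h_{5,1/m}=1$, in particular that $h_{5,15}h_{5,1/15}=1$ and $h_{5,5/3}h_{5,3/5}=1$, a short computation gives $A_1=h_{5,1/15}/h_{5,3/5}=h_{5,1/15}h_{5,5/3}$ and $A_5=h_{5,5/3}/h_{5,15}=h_{5,1/15}h_{5,5/3}$, so that $Q:=A_1/A_5=1$ while $P:=A_1A_5=\left(h_{5,1/15}h_{5,5/3}\right)^2$. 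Substituting $Q=1$ collapses \eqref{S51} to a polynomial identity in the single quantity $v:=\sqrt P-1/\sqrt P=h_{5,1/15}h_{5,5/3}-1/(h_{5,1/15}h_{5,5/3})$.

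I would then factor that polynomial and, since $h_{5,1/15}h_{5,5/3}>1$ forces $v>0$, retain the factor with a positive real root. This factor is a genuine cubic, namely $v^3-2v^2+8v-4=0$; equivalently $v$ is the positive solution of $\left(v^2+4\right)^3=80$, so that $v^2=2\cdot 10^{1/3}-4$. That cubic is precisely what produces the cube roots of $10$ in the final formulas. Solving $w-1/w=v$ for $w:=h_{5,1/15}h_{5,5/3}>0$ then gives $w=2^{-1/3}\left(5^{1/6}+\sqrt{5^{1/3}-2^{2/3}}\right)$. Next, in order to separate $h_{5,1/15}$ from $h_{5,5/3}$, I would set $n=1/15$ in \eqref{SRh3}; there $h_{5,n}h_{5,9n}=h_{5,1/15}h_{5,3/5}$ and $h_{5,9n}/h_{5,n}=h_{5,3/5}/h_{5,1/15}=1/w$, so substituting the value of $w$ turns \eqref{SRh3} into a quadratic in $s:=h_{5,1/15}h_{5,3/5}$. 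Solving it and keeping the root with $s>1$ gives $s=3/(10^{2/3}+10^{1/3}-5)$.

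Finally, since $w=h_{5,1/15}h_{5,5/3}=h_{5,1/15}/h_{5,3/5}$ and $s=h_{5,1/15}h_{5,3/5}$, we have $h_{5,1/15}^2=ws$ and $h_{5,3/5}^2=s/w$; hence $h_{5,15}=1/\sqrt{ws}$ and $h_{5,5/3}=\sqrt{w/s}$. Inserting the explicit values of $w$ and $s$, rationalizing by means of $\left(5^{1/6}\right)^2-\left(\sqrt{5^{1/3}-2^{2/3}}\right)^2=2^{2/3}$, and taking square roots then yields \eqref{S512} and \eqref{S513}. The one genuinely delicate step is the first one: verifying that \eqref{S51} with $Q=1$ factors over the relevant field and that the factor which actually vanishes is the stated cubic (rather than one of the spurious quadratic or cubic factors), and then extracting its root in the compact surd form above. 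Everything after that --- the quadratic coming from \eqref{SRh3} and the closing radical arithmetic --- is routine.
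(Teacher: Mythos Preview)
Your proposal is correct and follows essentially the same route as the paper: specialize \eqref{S51} at $n=1/15$ to obtain the cubic $t^3-2t^2+8t-4=0$ for $t=w-1/w$ with $w=h_{5,1/15}h_{5,5/3}$, then use \eqref{SRh3} at $n=1/15$ to obtain a quadratic for $s=h_{5,1/15}h_{5,3/5}$, and finally recover $h_{5,15}=1/\sqrt{ws}$ and $h_{5,5/3}=\sqrt{w/s}$. In fact your intermediate value $w=2^{-1/3}\bigl(5^{1/6}+\sqrt{5^{1/3}-2^{2/3}}\bigr)$ is the correct one (consistent with $t>0$ and with the final formulas), whereas the paper's displayed \eqref{S4512} has a sign slip; the remaining steps in both arguments coincide.
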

\begin{proof}
	Employing \eqref{hkn} in \eqref{S51} with $n=1/15$ and recalling that $h_{5,n}h_{5,1/n}=1,$ we obtain
	\begin{equation}\label{S4511}
	t^3+8t-2t^2=4,\,\,\,\text{where}\,\,\,t:=x-\frac{1}{x}\,\,\text{and}\,\,x=h_{5,1/15}h_{5,5/3}.
	\end{equation}
	Since $t>0,$ we find that
	\begin{equation}\label{S4512}
	h_{5,1/15}h_{5,5/3}=2^{-1/3}(5^{1/6}-\sqrt{5^{1/3}-2^{2/3}}).
	\end{equation}
	Now setting $n=1/15$ in \eqref{SRh3} and using \eqref{S4512}, we deduce that
	\begin{equation}\label{S4513}
	(3s+5-10^{1/3}-10^{2/3})(15s-5-2(10)^{2/3}-5(10)^{1/3})=0, 
	\end{equation}
	where\,\,\,$s=h_{5,1/15}h_{5,3/5}$. Since $s>1$, we find that
	\begin{equation}\label{S4514}
	h_{5,1/15}h_{5,3/5}=\frac{1}{3}+2\frac{10^{2/3}}{15}+\frac{10^{1/3}}{3}.
	\end{equation}
	Using \eqref{S4514} and \eqref{S4512}, we obtain \eqref{S512} and \eqref{S513}.
\end{proof}

\begin{theorem}\label{44} We have
	\begin{eqnarray}
	h_{3,35}&=&2^{-1/2}[(9-4\sqrt 5) (\sqrt{21}+2\sqrt{5})]^{{1}/{4}}[(\sqrt 7-\sqrt{5})(\sqrt 5+\sqrt{3})]^{{1}/{2}}, \label{S735}
	\\h_{3,7/5}&=&2^{-1/2}[(9-4\sqrt 5) (\sqrt{21}+2\sqrt{5})]^{{1}/{4}}[(\sqrt 7+\sqrt{5})(\sqrt 5-\sqrt{3})]^{{1}/{2}},\label{S736}\\
	h_{5,21}&=&2^{-1}\left\{(3\sqrt 3-5) (3+\sqrt{7})(\sqrt 7-\sqrt{5})(\sqrt 5+\sqrt{3})\right\}^{{1}/{2}}, \label{S521}\\
	h_{5,7/3}&=&2^{-1}\left\{(3\sqrt 3- 5) (3+\sqrt{7})(\sqrt 7+\sqrt{5})(\sqrt 5-\sqrt{3})\right\}^{{1}/{2}},\label{S522}\\
		h_{7,15}&=&2^{-1/2}\{(9-4\sqrt 5)(\sqrt{21}-2\sqrt{5})\}^{1/4}\{(3\sqrt{3}-5)(3+\sqrt
		7)\}^{1/2},\label{ji75}\\
		h_{7,5/3}&=&2^{1/2}\{(9+4\sqrt 5)(\sqrt{21}+2\sqrt{5})\}^{1/4}\{(3\sqrt{3}-5)(3+\sqrt
		7)\}^{1/2}.\label{ji76}
	\label{S738}\end{eqnarray}
\end{theorem}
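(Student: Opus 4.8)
The plan is to treat Theorem \ref{44} as the degree-$7$ companion of Theorem \ref{42} and of the $h_{5,15}$ evaluation, the driving identity now being the ``mixed'' modular equation \eqref{S71}.

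First I would run the scheme of Theorem \ref{42} on \eqref{S71}: writing $P=A_1A_7$ and $Q=A_1/A_7$ through the parameters $h_{3,\cdot}$, specializing the argument $q$ to the point at which the moduli of orders $1,3,5,15$ pair off against those of orders $7,21,35,105$, and invoking $h_{3,m}h_{3,1/m}=1$, the modular equation \eqref{S71} collapses to a polynomial in $t:=x-1/x$ with $x:=h_{3,1/35}\,h_{3,7/5}$. Selecting the root forced by $0<q<1$ (so $x>1$) then fixes $x$.

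To separate $h_{3,35}$ from $h_{3,7/5}$ I would specialize \eqref{NDBh5} at the same argument. There the parameters that occur are $h_{3,1/35}$ and $h_{3,25/35}=h_{3,5/7}$, whose ratio $h_{3,5/7}/h_{3,1/35}=1/x$ is already known from the previous step, so \eqref{NDBh5} becomes a polynomial for $s:=h_{3,1/35}\,h_{3,5/7}$. Solving for $s$ and combining,
\[
h_{3,35}=\frac{1}{\sqrt{xs}},\qquad h_{3,7/5}=\sqrt{x/s},
\]
gives \eqref{S735} and \eqref{S736} once the radicals are denested. The transfer to the other moduli is then formal: \eqref{jy6} with $a=5$, $b=21$, $c=3$, $d=35$, $k=1/7$ yields $h_{5,21}\,h_{5,3/7}=h_{3,5/7}\,h_{3,35}$, i.e.\ $h_{5,21}/h_{5,7/3}=1/x$; inserting this known ratio into \eqref{SRh3} with parameter $7/3$ turns it into a quadratic for $\Pi:=h_{5,21}\,h_{5,7/3}$, after which $h_{5,21}=\sqrt{\Pi/x}$ and $h_{5,7/3}=\sqrt{\Pi x}$ give \eqref{S521} and \eqref{S522}. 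Finally \eqref{jy7} with $(k,n,m)=(3,7,5)$ gives $h_{7,15}=h_{3,7/5}\,h_{5,21}$, and with $(k,n,m)=(7,5,3)$ gives $h_{7,5/3}\,h_{3,35}=h_{5,21}$, so that $h_{7,5/3}=h_{5,21}/h_{3,35}$; substituting the values already found establishes \eqref{ji75} and \eqref{ji76}.

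The main obstacle is the bookkeeping rather than the ideas. Since \eqref{S71} carries half-integer powers of $P$ and $Q$, its specialization in the first step is a polynomial of non-trivial degree and one must isolate the single irreducible factor that vanishes; the same care is needed for the polynomials coming from \eqref{NDBh5} and \eqref{SRh3}. After that, the repeated task is to denest nested radicals so that all six quantities take the compact forms stated, and to check at each step that the branch chosen is the one forced by the monotonicity of $q\mapsto h_{k,n}$ and by $0<q<1$. None of this is conceptually deep, but it is long, and a slip in any denesting would propagate through \eqref{jy6} and \eqref{jy7} to the $h_5$ and $h_7$ values.
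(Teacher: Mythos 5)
Your proposal is correct and follows essentially the same route as the paper: specialize \eqref{S71} at $n=1/35$ to get a quadratic in $t=x-1/x$ for $x=h_{3,1/35}h_{3,7/5}$, separate the two factors via \eqref{NDBh5}, transfer to the $h_{5,\cdot}$ parameters with \eqref{jy6} and separate them with \eqref{SRh3}, and finally obtain the $h_{7,\cdot}$ values from \eqref{jy7}. The only deviations are immaterial reparametrizations (you work with the reciprocals of the paper's auxiliary products $s$ and take $n=7/3$ instead of $n=1/21$ in \eqref{SRh3}).
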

\begin{proof}
	Employing \eqref{hkn} and \eqref{S71} with $n=1/35$ and recalling that $h_{3,n}h_{3,1/n}=1,$ we obtain
	\begin{equation}\label{S7h1}
	t^2+2-10t+2=0,\,\,\text{where}\,\,t:=x-\frac{1}{x}\,\,\text{and}\,\,x:=h_{3,1/35}h_{3,7/5}.
	\end{equation}
	On solving the above equation for $t$ and observe that $0<t<1$, we have
	\begin{equation}\label{S7h2}
	x-\frac{1}{x}=5-\sqrt{21.}
	\end{equation}
	Again, solving the above equation for $x$ and notice that $x>0$, we get
	\begin{equation}\label{S7h3}
	h_{3,1/35}h_{3,7/5}=\frac{(\sqrt 7+\sqrt 5)(\sqrt 5-\sqrt 3)}{2}.
		\end{equation}
	Set $n=1/35$ in \eqref{NDBh5} and with the help of \eqref{S7h3}, we deduce that 
	\begin{eqnarray}\label{S7h4}
	(s^2-9\sqrt{21}+18\sqrt 5-4\sqrt{105}-40)(s^2-9\sqrt{21}-18\sqrt 5+4\sqrt{105}+40)=0,
	\end{eqnarray}
	where $s=h_{3,35}h_{3,7/5}.$ 
	Since $0<s<1$, we get 
	\begin{equation}\label{S7h5}
	h_{3,35}h_{3,7/5}=\sqrt{(9-4\sqrt 5)(\sqrt{21}+2\sqrt 5)}.
	\end{equation}
	Using \eqref{S7h3} and \eqref{S7h5}, we obtain \eqref{S735} and \eqref{S736}.

	 Now we proceed to prove $h_{5,21}$ and $h_{5,7/3}$. Setting $a = 5$, $b = 21$, $c = 3$, $d = 35$ and $k = 1/7$ in \eqref{jy6} and using the fact $h_{k,n}=h_{n,k},$ we find 	 \begin{equation}\label{h521}
	 h_{3,35}h_{3,5/7}=h_{5,12}h_{5,3/7}.
	 \end{equation}
	 From \eqref{S7h3} and \eqref{h521}, we have
	 \begin{equation}\label{h5211}
	 h_{5,21}h_{5,3/7}=\frac{(\sqrt 7-\sqrt 5)(\sqrt 5+\sqrt 3)}{2}.
	 \end{equation}
	 Now set $n=1/21$ in \eqref{SRh3} and using \eqref{h5211}, we find
	 \begin{equation}\label{h5212}
	(2s-9\sqrt 3+5\sqrt 7-15+3\sqrt{21})(2s-9\sqrt 3+5\sqrt 7+15-3\sqrt{21})=0,
	 \end{equation}
	 where $s:=h_{5,1/21}h_{5,3/7}$. Note that $s>1$, we find that
	 \begin{equation}\label{h5213}
	 h_{5,1/21}h_{5,3/7}=\frac{(3\sqrt 3-5)(3-\sqrt 7)}{2}.
	 \end{equation}
	 By \eqref{h5211} and \eqref{h5213} , we obtain \eqref{S521} and \eqref{S522}.
	 
	 Again Setting $k=3, n=7$ and $m=5$, in \eqref{jy7}, we have
	 \begin{equation}\label{ji77}
	 h_{7,15}=h_{3,7/5}h_{5,21}.
	 \end{equation}
	 Setting $k=7, n=5$ and $m=3$, in \eqref{jy7}, we have
	 \begin{equation}\label{ji78}
	 h_{7,5/3}=h_{5,21}h_{3,1/35}.
	 \end{equation}
	 From the equations \eqref{ji77} and \eqref{ji78}, we arrive at \eqref{ji73} and \eqref{ji74}.
\end{proof}

\section{Explicit Evaluation of ratios of theta--functions $\psi(-q)$}\label{S5}
In this section, we explicitly evaluate $l_{k,n}$ with the aid of following lemmas.

\begin{lemma}\cite{ ND2,jy4} For all positive real number $k, a, b, c, m, n,$ and $d$, with $ab=cd,$ we have 
	\begin{align}
	&l_{a,b}l_{kc,kd}=l_{ka,kb}l_{c,d},\label{ljy6}\\
	&l_{k,n/m}l_{m,nk}=l_{n,mk}.\label{ljy7}
	\end{align}
\end{lemma}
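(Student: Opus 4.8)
The plan is to deduce both \eqref{ljy6} and \eqref{ljy7} from the definition \eqref{lkn} together with a single reciprocal transformation for $\psi(-q)$. First I would record the transformation
\[
\psi\!\left(-e^{-\pi/t}\right)=\sqrt{t}\;e^{\frac{\pi}{8}\left(\frac{1}{t}-t\right)}\,\psi\!\left(-e^{-\pi t}\right),\qquad t>0 .
\]
By the Jacobi triple product one has $\psi(-q)=f(-q)f(-q^{4})/f(-q^{2})$, so with $q=e^{-\pi t}$ and $\eta$ the Dedekind eta function, $\psi(-e^{-\pi t})=e^{\pi t/8}\,\eta(it/2)\eta(2it)/\eta(it)$; applying $\eta(-1/\tau)=\sqrt{-i\tau}\,\eta(\tau)$ to the three eta-factors (taken at $\tau=2it$, $\tau=it/2$, $\tau=it$) and collecting the resulting square roots yields the displayed identity. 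Equivalently, $g(t):=t^{1/4}e^{-\pi t/8}\psi(-e^{-\pi t})$ satisfies $g(1/t)=g(t)$. Then, using $\sqrt{nk}=k\sqrt{n/k}$, hence $(k-1)\sqrt{n/k}=\sqrt{nk}-\sqrt{n/k}$, I would check that the normalizing factor $k^{1/4}e^{-\frac{(k-1)\pi}{8}\sqrt{n/k}}$ in \eqref{lkn} is precisely what makes the $k$-powers cancel, so that
\[
l_{k,n}=\frac{g\!\left(\sqrt{n/k}\,\right)}{g\!\left(\sqrt{nk}\,\right)} .
\]
This reformulation uses no transformation; it is pure bookkeeping with the exponents in \eqref{lkn}.

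With this in hand, \eqref{ljy6} is immediate: since $ab=cd$ we have $\sqrt{ab}=\sqrt{cd}$, so
\[
l_{a,b}\,l_{kc,kd}=\frac{g(\sqrt{b/a})\,g(\sqrt{d/c})}{g(\sqrt{ab})\,g(k\sqrt{cd})}=\frac{g(\sqrt{b/a})\,g(\sqrt{d/c})}{g(k\sqrt{ab})\,g(\sqrt{cd})}=l_{ka,kb}\,l_{c,d}.
\]
(In fact \eqref{ljy6} does not need the transformation at all: expanding both sides through \eqref{lkn} directly, the numerator $\psi$-values agree, the denominator $\psi$-values agree because $\sqrt{ab}=\sqrt{cd}$, the fourth-roots agree because $a\cdot kc=ka\cdot c$, and the exponential factors agree because their quotient is $\exp\!\bigl(\tfrac{\pi(k-1)}{8}(\sqrt{ab}-\sqrt{cd})\bigr)=1$.) For \eqref{ljy7}, expanding $l_{k,n/m}\,l_{m,nk}$, the middle value $g(\sqrt{nk/m})$ cancels, leaving $g(\sqrt{n/(mk)})/g(\sqrt{nkm})$; since $\sqrt{nkm}=\sqrt{mkn}$ and, by the inversion invariance, $g(\sqrt{n/(mk)})=g(\sqrt{mk/n})$, this equals $g(\sqrt{mk/n})/g(\sqrt{mkn})=l_{n,mk}$, which is \eqref{ljy7}.

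Essentially all of the content lies in the reciprocal transformation for $\psi(-q)$; everything after it is routine manipulation of the arguments. The main obstacle is therefore to establish that transformation with the \emph{correct} normalizing data — the exponent $\pi t/8$ and the power $t^{1/4}$ — since the whole scheme collapses unless these match the factors built into \eqref{lkn}, and one must keep careful track of the branch of the square root arising in the $\eta$-transformation. Alternatively, one may simply invoke a known transformation formula for $\psi(-q)$ and then argue exactly as above.
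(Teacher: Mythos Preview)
Your argument is correct. The key reformulation $l_{k,n}=g(\sqrt{n/k})/g(\sqrt{nk})$ with $g(t)=t^{1/4}e^{-\pi t/8}\psi(-e^{-\pi t})$ is valid (the exponent bookkeeping checks out exactly as you indicate), and the inversion $g(1/t)=g(t)$ follows from the $\eta$-computation you outline; both \eqref{ljy6} and \eqref{ljy7} then drop out as you show, with \eqref{ljy6} indeed requiring no transformation at all.

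As for comparison: the paper does not prove this lemma. It is quoted from the literature (Baruah--Saikia and Yi--Lee--Paek) and stated without argument, so there is no ``paper's own proof'' to match against. Your write-up therefore supplies strictly more than the paper does here: a self-contained derivation rather than a citation. The route you take --- packaging the normalization into a single modular-invariant function $g$ and then reading off the identities --- is the standard and cleanest way to handle such product relations among the $l_{k,n}$, and is in the same spirit as the proofs in the cited references.
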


\begin{lemma}For all positive real number $n,$ we have 
	\begin{align}
	&h^4_{3,n}+3h^4_{3,n}l^4_{3,n}=3+l^4_{3,n},\label{hl3}\\
	&h^2_{5,n}+\sqrt{5}h^2_{5,n}l^2_{5,n}=\sqrt{5}+l^2_{5,n}.\label{hl5}
	\end{align}
\end{lemma}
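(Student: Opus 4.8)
The plan is to derive both identities by specializing the two parametric ``mixed'' modular equations \eqref{D3} and \eqref{D5} at the base point $q=e^{-\pi\sqrt{n/k}}$, for which the arguments of $\varphi$ and $\psi$ coincide with those built into $h_{k,n}$ and $l_{k,n}$, and then rewriting the resulting algebraic relation.

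First, for \eqref{hl3} I would set $q=e^{-\pi\sqrt{n/3}}$, so that $q^3=e^{-\pi\sqrt{3n}}$. The definition \eqref{hkn} with $k=3$ then reads $\varphi(q)/\varphi(q^3)=3^{1/4}h_{3,n}$, hence the quantity $P=\varphi^4(q)/\varphi^4(q^3)$ of \eqref{D3} equals $3\,h_{3,n}^4$. For the $\psi$-quotient I would observe that with this choice of $q$ the normalizing factor in \eqref{lkn} is exactly $e^{-\frac{(3-1)\pi}{8}\sqrt{n/3}}=q^{1/4}$, so that $\psi(-q)/\psi(-q^3)=3^{1/4}q^{1/4}l_{3,n}$ and therefore $Q=\psi^4(-q)/\left(q\,\psi^4(-q^3)\right)=3\,l_{3,n}^4$. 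Substituting $P=3\,h_{3,n}^4$ and $Q=3\,l_{3,n}^4$ into $P+PQ=9+Q$ and dividing through by $3$ gives precisely \eqref{hl3}.

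The argument for \eqref{hl5} is the same with $3$ replaced by $5$ and fourth powers by squares: take $q=e^{-\pi\sqrt{n/5}}$, so $q^5=e^{-\pi\sqrt{5n}}$; then \eqref{hkn} with $k=5$ gives $\varphi(q)/\varphi(q^5)=5^{1/4}h_{5,n}$, so the $P$ of \eqref{D5} equals $\sqrt5\,h_{5,n}^2$, while the normalizing factor in \eqref{lkn} with $k=5$ is $e^{-\frac{(5-1)\pi}{8}\sqrt{n/5}}=q^{1/2}$, whence $\psi(-q)/\psi(-q^5)=5^{1/4}q^{1/2}l_{5,n}$ and $Q=\psi^2(-q)/\left(q\,\psi^2(-q^5)\right)=\sqrt5\,l_{5,n}^2$; feeding these into $P+PQ=5+Q$ and dividing by $\sqrt5$ yields \eqref{hl5}.

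There is no real obstacle here. The only point that needs care is verifying that the exponential normalizing factor $e^{-\frac{(k-1)\pi}{8}\sqrt{n/k}}$ built into \eqref{lkn} is precisely the power of $q$ required to absorb the stray $q$ in the denominators of the $Q$'s of \eqref{D3} and \eqref{D5}, so that $Q$ becomes a clean monomial in $l_{k,n}$; once this exponent bookkeeping is settled, both identities drop out immediately. One should also note that $q=e^{-\pi\sqrt{n/k}}$ satisfies $|q|<1$ for every positive real $n$, so both parametric identities are legitimately being evaluated inside their domain of validity.
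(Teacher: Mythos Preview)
Your proposal is correct and follows exactly the approach indicated in the paper: substitute $q=e^{-\pi\sqrt{n/k}}$ into \eqref{D3} and \eqref{D5} and rewrite $P$ and $Q$ in terms of $h_{k,n}$ and $l_{k,n}$ via their definitions. The paper's proof is in fact just a one-sentence transcription remark, and your detailed verification of the exponent bookkeeping (in particular that the normalizing factor in \eqref{lkn} is $q^{1/4}$ for $k=3$ and $q^{1/2}$ for $k=5$, so that $Q=3\,l_{3,n}^4$ and $Q=\sqrt5\,l_{5,n}^2$ respectively) simply fleshes out what the paper leaves implicit.
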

\begin{proof}
	Transcribing the equations \eqref{D3} and \eqref{D5} respectively by using the definition of $h_{k,n}$ and $l_{k,n}$ with $k=3$ and $k=5$ respectively, we arrive at \eqref{hl3} and \eqref{hl5}.
\end{proof}
\begin{theorem}\label{l42} We have
	\begin{align}
	l_{3,20}&=\frac{\left\{(3+\sqrt{5})(\sqrt{2}+1)\right\}^{1/2}\left\{\sqrt{394+120\sqrt{10}}+\sqrt{390+120\sqrt{10}}\right\}}{2^{3/4}},\label{lSh20}\\	
	l_{3,4/5}&=\frac{\left\{(3-\sqrt{5})(\sqrt{2}-1)\right\}^{1/2}\left\{\sqrt{394+120\sqrt{10}}+\sqrt{390+120\sqrt{10}}\right\}}{2^{3/4}},\label{lSh201}\\
	l_{5,12}&=\left\{\frac{(3+\sqrt{5})(\sqrt{2}+1)}{2}\right\}^{1/2}\left\{\frac{(\sqrt{46+12\sqrt{10}}+\sqrt{42+12\sqrt{10}})}{2}\right\}^{1/2},\label{lSh202}\\
	l_{5,4/3}&=\left\{\frac{(3-\sqrt{5})(\sqrt{2}-1)}{2}\right\}^{1/2}\left\{\frac{(\sqrt{46+12\sqrt{10}}+\sqrt{42+12\sqrt{10}})}{2}\right\}^{1/2},\label{lSh203}\\
	l_{4,15}&=2^{-3/4}(2\sqrt{3}+\sqrt{10})^{1/2}(2\sqrt{3}+1)^{1/2}(5\sqrt{2}+4\sqrt{3})^{1/4}(\sqrt{5}+\sqrt{3})^{1/4},\label{lji73}\\
	l_{4,5/3}&=2^{-3/4}(2\sqrt{3}+\sqrt{10})^{1/2}(2\sqrt{3}+1)^{1/2}(5\sqrt{2}-4\sqrt{3})^{1/4}(\sqrt{5}-\sqrt{3})^{1/4}.\label{lji74}
	\end{align}
\end{theorem}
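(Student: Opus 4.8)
The six evaluations fall into three pairs, and in each pair the computation is pinned to a value of Yi's parameter $h_{k,n}$ already determined in Theorem \ref{42}. The plan is: use the $h$--$l$ bridge \eqref{hl3} to obtain $l_{3,20}$ and $l_{3,4/5}$ from \eqref{Sh20} and \eqref{Sh201}; use the bridge \eqref{hl5} to obtain $l_{5,12}$ and $l_{5,4/3}$ from \eqref{Sh202} and \eqref{Sh203}; and use the multiplicative law \eqref{ljy7} (mirroring the closing steps of the proof of Theorem \ref{42}) to obtain $l_{4,15}$ and $l_{4,5/3}$ from the parameters just found. Throughout I will also invoke the reflection property $l_{k,n}\,l_{k,1/n}=1$ of \cite{ND2,jy4}, the $\psi$-analogue of the identity $h_{k,n}h_{k,1/n}=1$ already used in Theorem \ref{42}.

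First I would specialise \eqref{hl3} at $n=1/20$. Writing $H:=h_{3,20}^{4}$ (known explicitly from \eqref{Sh20}) and using $h_{3,1/20}=1/h_{3,20}$, the identity \eqref{hl3} becomes \emph{linear} in $l_{3,1/20}^{4}$ and yields $l_{3,1/20}^{4}=(3H-1)/(3-H)$, hence $l_{3,20}^{4}=(3-H)/(3H-1)$. Taking the positive fourth root (the branch being fixed by $l_{3,20}>0$ together with a single numerical check of whether the radicand exceeds $1$) and denesting the resulting surd --- for instance recognising $\sqrt{394+120\sqrt{10}}=12+5\sqrt{10}$ and carrying out the analogous denesting of $\sqrt{390+120\sqrt{10}}$ --- gives \eqref{lSh20}; feeding $h_{3,4/5}$ from \eqref{Sh201} through the same computation gives \eqref{lSh201}. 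The modulus-$5$ pair is entirely parallel: \eqref{hl5} specialised at $n=1/12$ (respectively $n=3/4$) is linear in $l_{5,1/12}^{2}$ (respectively $l_{5,3/4}^{2}$), with coefficients assembled from $h_{5,12}^{2}$ (respectively $h_{5,4/3}^{2}$) via \eqref{Sh202} and \eqref{Sh203}; solving, taking positive square roots and simplifying gives \eqref{lSh202} and \eqref{lSh203}.

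Finally, for modulus $4$ I would imitate the last part of the proof of Theorem \ref{42}: putting $k=3$, $n=4$, $m=5$ in \eqref{ljy7} gives $l_{4,15}=l_{3,4/5}\,l_{5,12}$, while putting $k=4$, $n=5$, $m=3$ gives $l_{4,5/3}\,l_{3,20}=l_{5,12}$, so that $l_{4,5/3}=l_{5,12}\,l_{3,1/20}$; substituting the closed forms obtained above and collecting radicals yields \eqref{lji73} and \eqref{lji74}, with \eqref{ljy6} (taken with $a=5$, $b=12$, $c=3$, $d=20$, $k=1/4$) available as a consistency check. The only genuinely delicate step, as opposed to routine algebra, is this final denesting and reassembly of the nested quadratic surds built from $a$ and $b$ so that the answers come out in the compact shape stated; the attendant bookkeeping --- deciding at each stage which fourth or square root to extract, equivalently checking whether the quantity in question is larger or smaller than $1$ --- is where care is needed.
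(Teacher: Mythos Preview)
Your proposal is correct and follows essentially the same route as the paper: the paper likewise uses \eqref{hl3} with the values of $h_{3,n}$ at $n=20,\,4/5$ to obtain \eqref{lSh20}--\eqref{lSh201}, uses \eqref{hl5} with $h_{5,n}$ at $n=12,\,4/3$ for \eqref{lSh202}--\eqref{lSh203}, and then applies \eqref{ljy7} with exactly the parameter choices $(k,n,m)=(3,4,5)$ and $(4,5,3)$ to deduce \eqref{lji73}--\eqref{lji74}. The only cosmetic difference is that you specialise at the reciprocal arguments $n=1/20,\,1/12,\,3/4$ and then invoke $l_{k,n}l_{k,1/n}=1$, whereas the paper plugs in $n=20,\,4/5,\,12,\,4/3$ directly; this is the same computation up to a change of variable.
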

\begin{proof}
	Using the equation \eqref{hl3} and respective values of $h_{3,n}$ for $n=20, 4/5$, we arrive at \eqref{lSh20} and \eqref{lSh201} respectively. To prove \eqref{lSh202} and \eqref{lSh203}, we use \eqref{hl5} and respective values of $h_{5,n}$ for $n=12, 4/3$ respectively.
	
	Now we proceed to prove \eqref{lji73} and \eqref{lji74}. Setting $k=3, n=4$ and $m=5$, in \eqref{ljy7}, we have
	\begin{equation}\label{lji77}
	l_{4,15}=l_{3,4/5}l_{5,12}.
	\end{equation}
	Again setting $k=4, n=5$ and $m=3$, in \eqref{ljy7}, we have
	\begin{equation}\label{lji78}
	l_{4,5/3}=l_{5,12}l_{3,1/20}.
	\end{equation}
	From the equations \eqref{lji77} and \eqref{lji78}, we arrive at \eqref{lji73} and \eqref{lji74}.
\end{proof}
\begin{theorem}\label{th1} We have
	\begin{align}
	l_{3,15}&=3^{1/4}(\sqrt{5}+2)^{1/4}\lb(\frac{\sqrt{5}+\sqrt{3}}{2}\rb)^{1/2}(\sqrt{3}+1)^{1/2}, \label{lS41}\\
	l_{3,5/3}&=3^{-1/4}(\sqrt{5}-2)^{1/4}\lb(\frac{\sqrt{5}+\sqrt{3}}{2}\rb)^{1/2}(\sqrt{3}+1)^{1/2}\label{lS44},\\
	l_{3,35}&=2^{-1/2}[(9+4\sqrt 5) (\sqrt{21}+2\sqrt{5})]^{{1}/{4}}[(\sqrt 7+\sqrt{5})(\sqrt 5+\sqrt{3})]^{{1}/{2}},\label{lS735}\\
	l_{3,7/5}&=2^{-1/2}[(9+4\sqrt 5) (\sqrt{21}+2\sqrt{5})]^{{1}/{4}}[(\sqrt 7-\sqrt{5})(\sqrt 5-\sqrt{3})]^{{1}/{2}},\\
	l_{5,21}&=2^{-1}\left\{(3\sqrt 3+5) (3+\sqrt{7})(\sqrt 7+\sqrt{5})(\sqrt 5+\sqrt{3})\right\}^{{1}/{2}}, \label{lS521}\\
	l_{5,7/3}&=2^{-1}\left\{(3\sqrt 3+5) (3+\sqrt{7})(\sqrt 7-\sqrt{5})(\sqrt 5-\sqrt{3})\right\}^{{1}/{2}},\label{lS522}\\
	l_{7,15}&=2^{-1/2}\left\{(3\sqrt{3}+5)(3+\sqrt{7})(\sqrt{5}+2)\right\}^{1/2}(\sqrt{21}+2\sqrt{5})^{1/4},\label{lji75}\\
	l_{7,5/3}&=2^{-1/2}\left\{(3\sqrt{3}+5)(3+\sqrt{7})(\sqrt{5}-2)\right\}^{1/2}(\sqrt{21}-2\sqrt{5})^{1/4}.\label{lji76}\end{align}
\end{theorem}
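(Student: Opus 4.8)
The plan is to obtain each $l$-value from the corresponding $h$-value already computed in Section \ref{S4}, via the transfer identities \eqref{hl3} and \eqref{hl5}, and then to get the level-$7$ parameters by a multiplicative argument exactly parallel to the one used for $h_{7,15}$ and $h_{7,5/3}$ in Theorem \ref{44}.

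First I would treat $l_{3,15}$, $l_{3,5/3}$, $l_{3,35}$ and $l_{3,7/5}$ together. Solving \eqref{hl3} for $l^4_{3,n}$ gives $l^4_{3,n}=(3-h^4_{3,n})/(3h^4_{3,n}-1)$, a relation linear in $l^4_{3,n}$ once $h_{3,n}$ is known. Substituting the explicit values $h_{3,15}$, $h_{3,5/3}$ from \eqref{S41}, \eqref{S44} (resp. $h_{3,35}$, $h_{3,7/5}$ from \eqref{S735}, \eqref{S736}), simplifying the resulting nested surd and taking the positive fourth root — the branch being fixed by $l_{3,n}>1$ when $n>1$ and $l_{3,n}<1$ when $n<1$ — yields \eqref{lS41}, \eqref{lS44}, \eqref{lS735} and the companion for $l_{3,7/5}$. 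The parameters $l_{5,21}$ and $l_{5,7/3}$ are handled the same way, but with \eqref{hl5}: solving for $l^2_{5,n}$ gives $l^2_{5,n}=(\sqrt5-h^2_{5,n})/(\sqrt5\,h^2_{5,n}-1)$; inserting $h_{5,21}$, $h_{5,7/3}$ from \eqref{S521}, \eqref{S522} and taking the positive square root produces \eqref{lS521} and \eqref{lS522}.

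Finally, for $l_{7,15}$ and $l_{7,5/3}$ I would use \eqref{ljy7} exactly as \eqref{jy7} was used in Theorem \ref{44}: taking $k=3$, $n=7$, $m=5$ gives $l_{7,15}=l_{3,7/5}\,l_{5,21}$, and taking $k=7$, $n=5$, $m=3$ gives $l_{7,5/3}=l_{5,21}\,l_{3,1/35}$, where $l_{3,1/35}=1/l_{3,35}$. Multiplying the closed forms just obtained and collecting the radicals yields \eqref{lji75} and \eqref{lji76}. The only real obstacle is computational rather than conceptual: after substitution the quantities $(3-h^4_{3,n})/(3h^4_{3,n}-1)$ and $(\sqrt5-h^2_{5,n})/(\sqrt5\,h^2_{5,n}-1)$ are multiply nested surds, and verifying that each equals the fourth (resp. second) power of the asserted value requires careful denesting and rationalization, together with bookkeeping of the reciprocal convention $l_{k,n}$ versus $l_{k,1/n}$ and of which root branch makes $l_{k,n}$ lie on the correct side of $1$.
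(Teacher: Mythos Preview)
Your proposal is correct and matches the paper's approach exactly: the paper simply states that the proof is analogous to Theorem~\ref{l42}, which amounts to solving \eqref{hl3} (resp.\ \eqref{hl5}) for $l_{3,n}$ (resp.\ $l_{5,n}$) in terms of the known $h$-values, and then using \eqref{ljy7} with the same index choices you give to obtain $l_{7,15}$ and $l_{7,5/3}$. Your only additions are the explicit inversion formulas and the branch-selection remark, both of which are implicit in the paper's argument.
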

The proof of Theorem \ref{th1} is similar to the proof of the Theorem \ref{l42}, hence we omit the details. 
\section{A Continued fraction of order 12}\label{S6}
A continued fraction $H=H(q)$  for $|q|<1$ is defined by
\begin{equation}\label{h11}
H:=\frac{q(1-q)}{1-q^3}\cfplus\frac{q^3(1-q^2)(1-q^4)}{(1-q^3)(1+q^6)}\cfplus\frac{q^3(1-q^8)(1-q^{10})}{(1-q^3)(1-q^{12})}
\cfplus\dotsb,
\end{equation}
\begin{equation}\label{h12}
H=q\prod_{j=1}^{\infty}\dfrac{(1-q^{12j-1})(1-q^{12j-11})}{(1-q^{12j-5})(1-q^{12j-7})}=\frac{\varphi(q)-\varphi(q^3)}{\varphi(q)+\varphi(q^3)}.
\end{equation}
The continued fraction expression in \eqref{h11} has been derived by Mahadeva Naika et al. \cite{MSMBND}. For a summary of $H(q)$ and for references to other work related to $H(q)$ see \cite{SC}[Ch. 12].

\begin{lemma}For any positive rational $n$, we have
	\begin{align}\label{h13}
	H(e^{-\pi\sqrt{n}})=\frac{3^{1/4}h_{3,3n}-1}{3^{1/4}h_{3,3n}+1}.
	\end{align}	
	\begin{proof}
		Using the definition of $h_{k,n}$ with $k=3$ and \eqref{h12}, we arrive at  \eqref{h13}.
	\end{proof}
\end{lemma}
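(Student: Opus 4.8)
The plan is to prove the identity $H(e^{-\pi\sqrt{n}})=\dfrac{3^{1/4}h_{3,3n}-1}{3^{1/4}h_{3,3n}+1}$ by a direct substitution, recognizing that the right-hand side is essentially a Möbius transformation applied to the parameter $h_{3,3n}$, and that $h_{3,3n}$ is built from exactly the two theta-values that appear in the product/quotient formula \eqref{h12} for $H$.

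First I would recall the definition \eqref{hkn}, which gives
\begin{equation*}
h_{3,3n}=\frac{\varphi(e^{-\pi\sqrt{3n/3}})}{3^{1/4}\varphi(e^{-\pi\sqrt{3n\cdot 3}})}=\frac{\varphi(e^{-\pi\sqrt{n}})}{3^{1/4}\varphi(e^{-\pi\cdot 3\sqrt{n}})}=\frac{\varphi(e^{-\pi\sqrt{n}})}{3^{1/4}\varphi(e^{-3\pi\sqrt{n}})}.
\end{equation*}
Hence $3^{1/4}h_{3,3n}=\dfrac{\varphi(e^{-\pi\sqrt{n}})}{\varphi(e^{-3\pi\sqrt{n}})}$. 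Next I would set $q=e^{-\pi\sqrt{n}}$, so that $q^3=e^{-3\pi\sqrt{n}}$, and substitute into the right-hand side of \eqref{h13}:
\begin{equation*}
\frac{3^{1/4}h_{3,3n}-1}{3^{1/4}h_{3,3n}+1}=\frac{\dfrac{\varphi(q)}{\varphi(q^3)}-1}{\dfrac{\varphi(q)}{\varphi(q^3)}+1}=\frac{\varphi(q)-\varphi(q^3)}{\varphi(q)+\varphi(q^3)},
\end{equation*}
after clearing the common denominator $\varphi(q^3)$ (which is nonzero for $0<q<1$). By the right-hand equality in \eqref{h12}, this last expression is exactly $H(q)=H(e^{-\pi\sqrt{n}})$, which completes the argument.

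There is essentially no obstacle here: the lemma is a transcription of \eqref{h12} into the $h_{3,n}$ notation, so the only things to verify are that the indices work out ($h_{3,3n}$ indeed pairs $\varphi(e^{-\pi\sqrt{n}})$ with $\varphi(e^{-3\pi\sqrt{n}})$, using $\sqrt{3n/3}=\sqrt{n}$ and $\sqrt{9n}=3\sqrt{n}$) and that the factor $3^{1/4}$ cancels the $k^{1/4}=3^{1/4}$ appearing in the definition of $h_{k,n}$. Both are immediate. One should note for completeness that $q=e^{-\pi\sqrt{n}}\in(0,1)$ for every positive rational $n$, so all series converge and $\varphi(q^3)\neq 0$, making the manipulations valid; this is why the hypothesis "$n$ a positive rational" (indeed any positive real) suffices. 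The only mild care needed is to record the evaluation $H(q)=\dfrac{\varphi(q)-\varphi(q^3)}{\varphi(q)+\varphi(q^3)}$ from \eqref{h12} at the specific value $q=e^{-\pi\sqrt{n}}$, which is legitimate since the product in \eqref{h12} converges there.
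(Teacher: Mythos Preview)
Your proof is correct and follows exactly the approach indicated in the paper: substitute the definition \eqref{hkn} with $k=3$ at the index $3n$ to obtain $3^{1/4}h_{3,3n}=\varphi(q)/\varphi(q^3)$ for $q=e^{-\pi\sqrt{n}}$, and then invoke \eqref{h12}. You have simply written out in full the computation that the paper summarizes in one line.
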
  
We conclude this section, by tabulating few explicit evaluations of $H(e^{-\pi\sqrt{n}})$, for few positive rationals $n$ by using the values of $h_{3,n}$.
 \vspace{0.2cm}
 
\renewcommand{\arraystretch}{2.3}
\begin{tabular}{|c|c|}
	\hline
	$n$ & $H(e^{-\pi\sqrt{n}})$ \\
	\hline
	${1}/{45}$ & $\dfrac{(\sqrt{5}+2)^{1/4}(\sqrt{5}-\sqrt{3})^{1/2}(\sqrt{3}-1)^{1/2}-\sqrt{2}}{(\sqrt{5}+2)^{1/4}(\sqrt{5}-\sqrt{3})^{1/2}(\sqrt{3}-1)^{1/2}+\sqrt{2}}$ \\ \hline
	${5}/{9}$& $\dfrac{(\sqrt{5}+2)^{1/4}(\sqrt{5}-\sqrt{3})^{1/2}(\sqrt{3}+1)^{1/2}-\sqrt{2}}{(\sqrt{5}+2)^{1/4}(\sqrt{5}-\sqrt{3})^{1/2}(\sqrt{3}+1)^{1/2}+\sqrt{2}}$ \\ \hline
	${20}/{3}$ &  $ \dfrac{3^{1/4}\left\{(\sqrt{5}+2)\left(\sqrt{760-240\sqrt{10}}-\sqrt{759-240\sqrt{10}}\right)\right\}^{1/2}-(\sqrt{2}+1)}{3^{1/4}\left\{(\sqrt{5}+2)\left(\sqrt{760-240\sqrt{10}}-\sqrt{759-240\sqrt{10}}\right)\right\}^{1/2}+(\sqrt{2}+1)}$\\
	\hline
	${35}/{3}$ & $\dfrac{3^{1/4}\left\{(\sqrt{7}-\sqrt{5})(\sqrt{5}+\sqrt{3})\right\}^{1/2}-2^{1/2}\left\{(9+4\sqrt{5})(\sqrt{21}-2\sqrt{5})\right\}^{1/2}}{3^{1/4}\left\{(\sqrt{7}-\sqrt{5})(\sqrt{5}+\sqrt{3})\right\}^{1/2}+2^{1/2}\left\{(9+4\sqrt{5})(\sqrt{21}-2\sqrt{5})\right\}^{1/2}}$\\
	\hline
	${1}/{105}$  & $\dfrac{3^{1/4}\left\{(\sqrt{7}+\sqrt{5})(\sqrt{5}-\sqrt{3})\right\}^{1/2}-2^{1/2}\left\{(9-4\sqrt{5})(\sqrt{21}+2\sqrt{5})\right\}^{1/2}}{3^{1/4}\left\{(\sqrt{7}+\sqrt{5})(\sqrt{5}-\sqrt{3})\right\}^{1/2}+2^{1/2}\left\{(9-4\sqrt{5})(\sqrt{21}+2\sqrt{5})\right\}^{1/2}}$
	\\ \hline
\end{tabular}

\end{document}